\numberwithin{equation}{section}
\newtheorem{theorem}{Theorem}[section]
\newtheorem{lemma}[theorem]{Lemma}
\newtheorem{proposition}[theorem]{Proposition}
\newtheorem{corollary}[theorem]{Corollary}
\newtheorem{thm}{Theorem}
\theoremstyle{definition}
\newtheorem*{remark}{Remark}
\newtheorem*{conj}{Conjecture}
\begin{document}
\title[Lerch's formula and zeros of the quadrilateral zeta function]{On Lerch's formula and zeros of the \\quadrilateral zeta function}
\author[T.~Nakamura]{Takashi Nakamura}
\address[T.~Nakamura]{Department of Liberal Arts, Faculty of Science and Technology, Tokyo University of Science, 2641 Yamazaki, Noda-shi, Chiba-ken, 278-8510, Japan}
\email{nakamuratakashi@rs.tus.ac.jp}
\urladdr{https://sites.google.com/site/takashinakamurazeta/}
\subjclass[2010]{Primary 11M35, Secondary 11M26}
\keywords{Quadrilateral zeta function, real and complex zeros, the Lerch formula, Hadamard product formula, Riemann-von Mangoldt formula}

\maketitle

\begin{abstract}
Let $0 < a \le 1/2$ and define the quadrilateral zeta function by $2Q(s,a) := \zeta (s,a) + \zeta (s,1-a) + {\rm{Li}}_s (e^{2\pi ia}) + {\rm{Li}}_s(e^{2\pi i(1-a)})$, where $\zeta (s,a)$ is the Hurwitz zeta function and ${\rm{Li}}_s (e^{2\pi ia})$ is the periodic zeta function. 

In the present paper, we show that there exists a unique real number $a_0 \in (0,1/2)$ such that $Q(\sigma, a_0)$ has a unique double real zero at $\sigma = 1/2$ when $\sigma \in (0,1)$, for any $a \in (a_0,1/2]$, the function $Q(\sigma, a)$ has no zero in the open interval $\sigma \in (0,1)$ and for any $a \in (0,a_0)$, the function $Q(\sigma, a)$ has at least two real zeros in $\sigma \in (0,1)$.

Moreover, we prove that $Q(s,a)$ has infinitely many complex zeros in the region of absolute convergence and the critical strip when $a \in {\mathbb{Q}} \cap (0,1/2) \setminus \{1/6, 1/4, 1/3\}$. The Lerch formula, Hadamard product formula, Riemann-von Mangoldt formula for $Q(s,a)$ are also shown. 
\end{abstract}

\section{Introduction, Main results and Some remarks}
\subsection{Introduction}
For $0< a \le 1$, we define the Hurwitz zeta function $\zeta (s,a)$ and the periodic zeta function ${\rm{Li}}_s (e^{2\pi ia})$ by 
$$
\zeta (s,a) := \sum_{n=0}^\infty \frac{1}{(n+a)^s}, \quad
{\rm{Li}}_s (e^{2\pi ia}) := \sum_{n=1}^\infty \frac{e^{2\pi ina}}{n^s}, \qquad \sigma >1,
$$
respectively. The both Dirichlet series of $\zeta (s,a)$ and ${\rm{Li}}_s (e^{2\pi ia})$ converge absolutely in the half-plane $\sigma >1$ and uniformly in each compact subset of this region. The Hurwitz zeta function $\zeta (s,a)$ can be extended for all $s \in {\mathbb{C}}$ except $s=1$, where there is a simple pole with residue $1$ (see for instance \cite[Section 12]{Apo}). On the other hand, the Dirichlet series of the function ${\rm{Li}}_s (e^{2\pi ia})$ with $0<a<1$ converges uniformly in each compact subset of the half-plane $\sigma >0$ (see for example \cite[p.~20]{LauGa}). The function ${\rm{Li}}_s (e^{2\pi ia})$ with $0<a<1$ is analytically continuable to the whole complex plane (see for instance \cite[Section 2.2]{LauGa}). 

Obviously, one has $\zeta (s,1) = {\rm{Li}}_s (1) = \zeta (s)$, where $\zeta (s)$ is the Riemann zeta function. We can easily see that $\zeta (\sigma) >0$ when $\sigma >1$ form the series expression of $\zeta (s)$. Since (see for instance \cite[(2.12.4)]{Tit})
$$
\bigl( 1 - 2^{1-\sigma} \bigr) \zeta (\sigma) =
1 - \frac{1}{2^\sigma} + \frac{1}{3^\sigma} - \frac{1}{4^\sigma} + \cdots >0, \qquad 0<\sigma <1,
$$
one has $\zeta (\sigma) <0$ if $0<\sigma <1$. Moreover, we have $\zeta (0) = -1/2 <0$ (see for example \cite[(2.4.3)]{Tit}). Therefore, from the functional equation of $\zeta (s)$, we have the following (see for example \cite[Theorem 1.6.1]{KV} or \cite[Section 2.12]{Tit}).
\begin{thm}
All real zeros of $\zeta(s)$ are {\sc{simple}} and at {\sc{only}} the negative even integers.
\end{thm}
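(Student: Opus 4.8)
The plan is to combine the sign information for $\zeta(\sigma)$ on $[0,\infty)$ already recorded above with the functional equation in order to control the behaviour on the negative real axis. On $(0,\infty)$ we have seen that $\zeta(\sigma)>0$ for $\sigma>1$ and $\zeta(\sigma)<0$ for $0<\sigma<1$, while at the endpoint $\zeta(0)=-1/2<0$; since $s=1$ is a simple pole, this already shows that $\zeta(s)$ has no real zero with $\sigma\ge 0$. It therefore remains to locate the real zeros in $\sigma<0$, where I would invoke the asymmetric functional equation
\begin{equation}
\zeta(s) = 2^s \pi^{s-1}\sin\left(\frac{\pi s}{2}\right)\Gamma(1-s)\,\zeta(1-s)
\end{equation}
(see \cite[Section 2.1]{Tit}).

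For real $s=\sigma<0$ one has $1-\sigma>1$, so the factor $\zeta(1-s)$ is positive by the series expression, while $2^s$ and $\pi^{s-1}$ are positive and $\Gamma(1-s)$ is finite and positive because $1-s>1$ avoids the poles of the Gamma function. Consequently the only factor on the right-hand side that can vanish for $\sigma < 0$ is $\sin(\pi s/2)$, and this happens precisely when $s/2$ is a nonzero integer, that is, at $s=-2,-4,-6,\dots$. Combined with the absence of zeros for $\sigma\ge 0$, this shows that every real zero of $\zeta(s)$ lies at a negative even integer, and conversely each such point is a zero.

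To obtain simplicity, I would note that at each $s=-2k$ with $k\ge 1$ the factors $2^s$, $\pi^{s-1}$, $\Gamma(1-s)=\Gamma(2k+1)=(2k)!$ and $\zeta(1-s)=\zeta(2k+1)$ are all nonzero and analytic near $s=-2k$, whereas $\sin(\pi s/2)$ has a simple zero there; hence the product, and therefore $\zeta(s)$, has a simple zero at $s=-2k$. The one point requiring separate attention is $s=0$, where the right-hand side of the functional equation is an indeterminate product of the zero of $\sin(\pi s/2)$ against the pole of $\zeta(1-s)$ at $s=0$; this is exactly why the value $\zeta(0)=-1/2$ was recorded separately above, and it confirms that $s=0$ is not a zero. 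The main (and only mildly delicate) obstacle is thus this bookkeeping at $s=0$; everywhere else the argument reduces to a direct factor-by-factor inspection of the functional equation.
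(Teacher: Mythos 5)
Your proof is correct and takes essentially the same route as the paper, which likewise combines the sign facts $\zeta(\sigma)>0$ for $\sigma>1$, $\zeta(\sigma)<0$ for $0<\sigma<1$, and $\zeta(0)=-1/2$ with the functional equation, deferring the factor-by-factor inspection to \cite[Theorem 1.6.1]{KV} and \cite[Section 2.12]{Tit}. Your explicit treatment of the simple zeros of $\sin(\pi s/2)$ on $\sigma<0$ and the bookkeeping at $s=0$ merely fills in the details the paper leaves to those references.
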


The following Lerch's formula is well-know (see for example \cite[Theorem 1.3.4]{AAR})
\begin{equation}\label{eq:lerch1}
\zeta' (0,a) := \Bigl[ \frac{d}{ds} \zeta (s,a) \Bigr]_{s=0} = \log \frac{\Gamma (a)}{\sqrt{2\pi}}. 
\end{equation}
Especially, the value $\zeta ' (0,1) = -(\log 2\pi )/2$ plays a crucial role in the theory of the Riemann zeta function. For example, the approximation of $\zeta (s)$ near $s=1$ is given by
\begin{equation}\label{eq:kro1RZ}
\biggl[ \zeta(s) -\frac{1}{s-1} \biggr]_{s=1} - 2\zeta '(0) = \gamma_E + \log 2\pi,
\end{equation}
where $\gamma_E $ is the Euler constant (see \cite[(2.1.16)]{Tit}). Furthermore, the value $\zeta ' (0)$ appears in the Hadamard product formula for the completed Riemann zeta function $\xi (s)$. Put $2\xi (s) := s(s-1) \pi^{-s/2} \Gamma (s/2) \zeta (s)$. Then we have
\[
\xi (s,a) = e^{A+Bs} \prod_{\rho} \Bigl( 1 - \frac{s}{\rho} \Bigr) e^{s/\rho},
\]
where the product is over the zeros $\rho$ of $\zeta (s)$, the constants $A$ and $B$ are given as
\begin{equation}\label{eq:hadpro1RZ}
e^A  = \frac{1}{2}, \quad \mbox{ and } \quad B = \frac{\zeta' (0)}{\zeta (0)} - 1 - \frac{\gamma_E+\log \pi}{2}
\end{equation}
(see \cite[Chapter 2.12]{Tit}). Moreover, we have the following Riemann-von Mangoldt formula for $\zeta (s)$ (see \cite[Theorem 1.8.1]{KV} and \cite[Theorem 9.4]{Tit}).  Let $N(T, F)$ count the number of non-real zeros of a function $F(s)$ having $|\Im (s)| < T$. Then we have 
\begin{equation*}\label{eq:RvM1}
N\bigl( T, \zeta(s) \bigr) = \frac{T}{\pi} \log \frac{T}{2\pi} - \frac{T}{\pi} + O(\log T).
\end{equation*}

\subsection{Main results}
For $ 0 <a \le 1/2$, define the quadrilateral zeta function $Q(s,a)$ by
$$
2Q(s,a) := \zeta (s,a) + \zeta (s,1-a) +{\rm{Li}}_s (e^{2\pi ia}) + {\rm{Li}}_s (e^{2\pi i(1-a)}).
$$
The function $Q(s,a)$ can be continued analytically to the whole complex plane except $s=1$. In \cite[Theorem 1.1]{Na}, the author prove the functional equation 
\begin{equation}\label{eq:zfe1q}
Q(1-s,a) = \Gamma_{\!\! \cos} (s)  Q(s,a), \qquad \Gamma_{\!\! \cos} (s) := \frac{2\Gamma (s)}{(2\pi )^s} \cos \Bigl( \frac{\pi s}{2} \Bigr).
\end{equation}
We remark that the gamma factor in (\ref{eq:zfe1q}) completely coincides with that of the functional equation for $\zeta (s)$ (see \cite[Section 1.3]{Na}). Let $N_{\rm{Q}}^{\rm{CL}} (T)$ the number of the zeros of $Q(s,a)$ on the line segment from $1/2$ to $1/2 +iT$. In \cite[Theorem 1.2]{Na}, he showed that for any $0 < a \le 1/2$, there exist positive constants $A(a)$ and $T_0(a)$ such that 
\begin{equation*}\label{eq:ZCLQ1}
N_{\rm{Q}}^{\rm{CL}} (T) \ge A(a) T \quad \mbox{whenever} \quad T \ge T_0(a). 
\end{equation*}

In the present paper, we show the following statements on Lerch's formula, the Hadamard product formula, Riemann-von Mangoldt formula and real and complex zeros of $Q(s,a)$. First, we state results on real zeros of $Q(s,a)$. By Mathematica 11.3, the real number $a_0 \in (0,1/2)$ appeared in the next theorem on real zeros of $Q(s,a)$ is 
\begin{equation}\label{eq:av1}
\begin{split}
a_0 = 0.&11837513961527229358271903455211912971471769999053\\
&14554591427859384268411483278906208314018589873082...
\end{split}
\end{equation}

\begin{theorem}\label{th:m1}
There exists the unique real number $a_0 \in (0,1/2)$ such that \\
{\rm{(1)}} the function $Q(\sigma, a_0)$ has a unique double real zero at $\sigma = 1/2$ when $\sigma \in (0,1)$,\\ 
{\rm{(2)}} for any $a \in (a_0,1/2]$, the function $Q(\sigma, a)$ has no real zero in $\sigma \in (0,1)$,\\
{\rm{(3)}} for any $a \in (0,a_0)$, the function $Q(\sigma, a)$ has at least two real zeros in $\sigma \in (0,1)$. 
\end{theorem}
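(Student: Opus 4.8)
The plan is to treat $\sigma\mapsto Q(\sigma,a)$ as a real function on $(0,1)$ and to read off the three statements from the shape of its graph, using the functional equation for symmetry and a monotonicity property in $a$. First I would record that $Q(\sigma,a)$ is real for $\sigma\in(0,1)$, $a\in(0,1/2)$: since $e^{2\pi i(1-a)}=\overline{e^{2\pi ia}}$ one has ${\rm Li}_\sigma(e^{2\pi ia})+{\rm Li}_\sigma(e^{2\pi i(1-a)})=2\,{\rm Re}\,{\rm Li}_\sigma(e^{2\pi ia})\in\mathbb R$. Reading (\ref{eq:zfe1q}) for real $\sigma$ gives $Q(1-\sigma,a)=\chi(\sigma)Q(\sigma,a)$ with $\chi(\sigma):=2\Gamma(\sigma)(2\pi)^{-\sigma}\cos(\pi\sigma/2)$, and $\chi(\sigma)>0$ on $(0,1)$ with $\chi(1/2)=1$. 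Hence the real zeros of $Q(\cdot,a)$ in $(0,1)$ are symmetric about $1/2$; differentiating the relation at $\sigma=1/2$ yields $\partial_\sigma Q(1/2,a)=-\tfrac12\chi'(1/2)Q(1/2,a)$, so a zero at $\sigma=1/2$ is automatically (at least) a double zero. It is convenient to pass to the symmetric function $\Psi_a(\sigma):=\chi(\sigma)^{1/2}Q(\sigma,a)$, which satisfies $\Psi_a(\sigma)=\Psi_a(1-\sigma)$ and has exactly the zeros of $Q(\cdot,a)$.

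Next I would pin down the boundary data. From $Q(0,a)=-1/2$ (see (\ref{eq:Q-1/2})) and the simple pole of residue $1$ at $s=1$ one gets $Q(\sigma,a)\to-\infty$ as $\sigma\to1^-$, so (using $\chi(\sigma)\sim(1-\sigma)/2$ near $\sigma=1$) $\Psi_a\to-\infty$ at both ends of $(0,1)$. At $a=1/2$ the identities $\zeta(s,1/2)=(2^s-1)\zeta(s)$ and ${\rm Li}_s(-1)=-(1-2^{1-s})\zeta(s)$ collapse $Q$ to $Q(\sigma,1/2)=(2^\sigma+2^{1-\sigma}-2)\zeta(\sigma)$, whose first factor is positive on $[0,1]$ while $\zeta(\sigma)<0$ on $(0,1)$, so $Q(\sigma,1/2)<0$ throughout. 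At the opposite end, as $a\to0^+$ the singular parts of $\zeta(\sigma,a)+\zeta(\sigma,1-a)$ and of $2\,{\rm Re}\,{\rm Li}_\sigma(e^{2\pi ia})$ are of orders $a^{-\sigma}$ and $a^{\sigma-1}$ with positive coefficients, so $Q(\sigma,a)\to+\infty$ for each fixed $\sigma\in(0,1)$. In particular $g(a):=Q(1/2,a)$ runs from $+\infty$ to $g(1/2)<0$, so by the intermediate value theorem it vanishes somewhere in $(0,1/2)$.

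The analytic heart is a monotonicity lemma: $\partial_aQ(\sigma,a)<0$ on $(0,1)\times(0,1/2)$. Differentiating $2Q(\sigma,a)=\sum_{n\ge0}[(n+a)^{-\sigma}+(n+1-a)^{-\sigma}]+2\sum_{n\ge1}n^{-\sigma}\cos(2\pi na)$ produces a manifestly negative Hurwitz contribution $\sigma\sum_{n\ge0}[(n+1-a)^{-\sigma-1}-(n+a)^{-\sigma-1}]$ together with an oscillatory periodic contribution $-4\pi\sum_{n\ge1}n^{1-\sigma}\sin(2\pi na)$ (in the sense of analytic continuation). To prove the lemma I would recast both pieces through Hurwitz's formula, turning the combination into the single convergent expression $Q(s,a)=2\Gamma(1-s)(2\pi)^{s-1}\sin(\pi s/2)\,C(1-s,a)+C(s,a)$ with $C(w,a):=\sum_{n\ge1}n^{-w}\cos(2\pi na)$, and then estimate $\partial_aQ$ from this form with a sign-definite kernel. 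Granting the lemma, $g=Q(1/2,\cdot)$ is strictly decreasing, so its zero $a_0$ is unique; moreover for each fixed $\sigma$ the map $a\mapsto Q(\sigma,a)$ decreases from $+\infty$ to a negative value, hence meets $0$ at a unique threshold $\alpha(\sigma)\in(0,1/2)$ with $Q(\sigma,a)>0\iff a<\alpha(\sigma)$, $\alpha(\sigma)=\alpha(1-\sigma)$, and $\alpha(1/2)=a_0$.

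With these in hand the statements follow, modulo one geometric input. For $a<a_0$ we have $g(a)=Q(1/2,a)>0$ while $Q$ is negative at both ends of $(0,1)$, so the intermediate value theorem and the symmetry about $1/2$ yield at least two real zeros, which is (3). Statements (1) and (2) are together equivalent to the single fact that $\alpha$ attains its maximum at $\sigma=1/2$, i.e. $Q(\sigma,a_0)\le0$ on $(0,1)$ with equality only at $\sigma=1/2$ (equivalently, $\Psi_a$ is unimodal, or $Q(\cdot,a)$ has at most two zeros in $(0,1)$): granting it, at $a=a_0$ the function $Q(\cdot,a_0)$ touches $0$ only at its double zero $\sigma=1/2$, giving (1), while for $a>a_0=\max_\sigma\alpha(\sigma)$ monotonicity forces $Q(\sigma,a)<Q(\sigma,a_0)\le0$ for every $\sigma$, giving (2). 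I expect this unimodality (equivalently the at-most-two-zeros bound, equivalently $\alpha$ peaking at $1/2$) to be the main obstacle: the functional-equation symmetry and the $a$-monotonicity by themselves do not preclude extra humps of $\Psi_a$ away from $1/2$, and ruling them out seems to require a genuine global estimate — for instance $\Psi_a''<0$ on $(0,1)$, or a constant sign of $\partial_\sigma Q$ along the zero curve $a=\alpha(\sigma)$ over $(0,1/2)$ — which is presumably where the delicate analysis, and the role of the explicit value $a_0$ in (\ref{eq:av1}), will be concentrated.
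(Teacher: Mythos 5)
Your reduction of the theorem matches the paper's skeleton almost exactly: reality of $Q(\sigma,a)$ on the real axis, the symmetry $Q(1-\sigma,a)=\chi(\sigma)Q(\sigma,a)$ with $\chi>0$ on $(0,1)$ and $\chi(1/2)=1$ from (\ref{eq:zfe1q}), the automatic double zero at $\sigma=1/2$ (your derivation, differentiating the functional equation to get $Q'(1/2,a)=-\tfrac12\chi'(1/2)Q(1/2,a)$, is in fact cleaner than the paper's, which infers $Q'(1/2,a_0)=0$ from $Q(1/2-\varepsilon,a_0)Q(1/2+\varepsilon,a_0)\ge 0$), the boundary data $Q(0,a)=-1/2$ from (\ref{eq:Q-1/2}), and the monotonicity $\partial_a Q(\sigma,a)<0$ of Lemma \ref{lem:1}. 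Note that you need not reprove the monotonicity: the paper imports it from \cite[Lemma 4.4]{Na}, and your sketch via the representation $Q(s,a)=2\Gamma(1-s)(2\pi)^{s-1}\sin(\pi s/2)C(1-s,a)+C(s,a)$ is the least convincing part of the proposal, since $\partial_a C(w,a)$ is an oscillating sine series with no evident sign; treating the lemma as a citation is the safer move. For existence and uniqueness of $a_0$ the paper works at $\sigma=1/2$ through $Z$: by (\ref{eq:feZQ1}) at $s=1/2$ one has $Q(1/2,a)=Z(1/2,a)$, and Lemma \ref{lem:2} (from \cite{NaZC}) supplies the unique zero of $Z(1/2,\cdot)$; your alternative, $Q(1/2,a)\to+\infty$ as $a\to 0^+$ together with $Q(\sigma,1/2)=(2^\sigma+2^{1-\sigma}-2)\zeta(\sigma)<0$ and monotonicity, is equally valid. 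Your proofs of (3), and of (2) granted (1), coincide with the paper's.

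The genuine gap is exactly the one you flagged yourself: nothing in your argument establishes that $Q(\sigma,a_0)<0$ for all $\sigma\in(0,1)\setminus\{1/2\}$ (your unimodality of $\Psi_{a_0}$, equivalently $\alpha$ peaking only at $1/2$). Without it, (1) does not exclude additional zero pairs of $Q(\cdot,a_0)$ away from $1/2$, and (2) does not follow from $\partial_aQ<0$, since $Q(\sigma,a)<Q(\sigma,a_0)$ is useless unless $Q(\sigma,a_0)\le 0$ everywhere. You conjectured that a global analytic estimate such as $\Psi_a''<0$ would be required; be aware that the paper does not supply one either --- this step is computer-assisted. The paper brackets $a_\flat<a_0<a_\sharp$ with explicit $11$-digit endpoints, uses Lemma \ref{lem:1} to squeeze $Q(\sigma,a_0)$ between $Q(\sigma,a_\sharp)$ and $Q(\sigma,a_\flat)$, inspects Mathematica plots of these two functions on $[0,1/2]$, verifies $Q''(\sigma,a)<-2$ on $[1/3,1/2]$ (strict concavity, forcing at most the double zero near the center), and transfers to $(1/2,1)$ via (\ref{eq:zfe1q}). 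So to complete your proposal you must either prove the missing global estimate --- which would genuinely strengthen the paper --- or replicate this numerical interval verification; as written, the proposal is an accurate and well-organized reduction, but not a proof.
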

By this theorem, we have the following as an analogue of Theorem A.
\begin{corollary}\label{cor:1}
All real zeros of the quadrilateral zeta function $Q(s,a)$ are {\sc{simple}} and are located {\sc{only}} at the negative even integers just like $\zeta (s)$ if and only if $a_0 < a \le 1/2$.
\end{corollary}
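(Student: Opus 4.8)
The plan is to localize the question to the single interval $(0,1)$. For every $a\in(0,1/2]$ I would split the real axis into the five pieces $(-\infty,0)$, $\{0\}$, $(0,1)$, $\{1\}$ and $(1,\infty)$ and show that on all of them except $(0,1)$ the real zero structure of $Q(\sigma,a)$ is exactly that of $\zeta(\sigma)$, \emph{independently of} $a$: there is no zero on $[0,\infty)\setminus\{1\}$ (with $s=1$ a simple pole), while on $(-\infty,0)$ the only zeros are simple zeros at the negative even integers. Granting this, the property ``every real zero of $Q(\cdot,a)$ is simple and lies only at a negative even integer'' is equivalent to ``$Q(\cdot,a)$ has no real zero in $(0,1)$'', because any zero in $(0,1)$ would be a real zero away from the negative even integers (and, when double, also a non-simple one). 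Theorem~\ref{th:m1} then reads off exactly when the latter holds: part~(2) gives the absence of zeros in $(0,1)$ for $a\in(a_0,1/2]$, whereas part~(1) (a double zero at $1/2$) and part~(3) (at least two zeros) show that a zero is present as soon as $a\le a_0$. Hence the equivalence will be $a\in(a_0,1/2]$, as claimed.

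The analytic heart, and the step I expect to be the main obstacle, is the nonvanishing $Q(\sigma,a)>0$ for $\sigma>1$ and \emph{every} $a\in(0,1/2]$; this is not term-by-term positive because $\sum_{n\ge1}\cos(2\pi na)n^{-\sigma}$ may be negative. I would pass to theta functions. Writing $\zeta(\sigma,a)+\zeta(\sigma,1-a)=\sum_{k\in\mathbb{Z}}|k+a|^{-\sigma}$ and inserting the integral formula $|x|^{-\sigma}=\pi^{\sigma/2}\Gamma(\sigma/2)^{-1}\int_0^\infty t^{\sigma/2-1}e^{-\pi x^2t}\,dt$ in both the Hurwitz and the periodic parts gives, for $\sigma>1$,
\begin{equation*}
2Q(\sigma,a)=\frac{\pi^{\sigma/2}}{\Gamma(\sigma/2)}\int_0^\infty t^{\sigma/2-1}\,\Phi(t)\,dt,\qquad \Phi(t):=\sum_{k\in\mathbb{Z}}e^{-\pi(k+a)^2t}+2\sum_{n\ge1}\cos(2\pi na)\,e^{-\pi n^2t}.
\end{equation*}
As the kernel $\pi^{\sigma/2}\Gamma(\sigma/2)^{-1}t^{\sigma/2-1}$ is positive, it suffices to prove $\Phi(t)>0$ for all $t>0$. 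Poisson summation for the Gaussian turns the second sum ($=\sum_{n\ne0}\cos(2\pi na)e^{-\pi n^2t}$) into $t^{-1/2}\theta(1/t)-1$, where $\theta(t):=\sum_{k\in\mathbb{Z}}e^{-\pi(k+a)^2t}>0$, so that $\Phi(t)=\theta(t)+t^{-1/2}\theta(1/t)-1$. I would then prove $\Phi>0$ by splitting at $t=1$. For $t\ge1$ the slowest-decaying Hurwitz term $e^{-\pi a^2t}$ already beats the whole periodic sum, since $a^2\le1/4<1$ and $\big|2\sum_{n\ge1}\cos(2\pi na)e^{-\pi n^2t}\big|\le 2\sum_{n\ge1}e^{-\pi n^2t}\le 2(1-e^{-\pi})^{-1}e^{-\pi t}$. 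For $t\le1$ one uses the self-dual identity $2\sum_{n\ge1}e^{-\pi n^2t}=t^{-1/2}-1+t^{-1/2}\,2\sum_{n\ge1}e^{-\pi n^2/t}$ to check that the divergent part $t^{-1/2}$ of $\theta(t)$ survives uncancelled, leaving $\Phi(t)\ge 1-4\sum_{n\ge1}e^{-\pi n^2}>0$ after the elementary monotonicity bound $t^{-1/2}e^{-\pi n^2/t}\le e^{-\pi n^2}$ valid for $t\le1$. This theta estimate is the crux.

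With positivity on $(1,\infty)$ secured, the remaining pieces are immediate. The value $Q(0,a)=-1/2\ne0$ is already recorded, and $s=1$ is a simple pole, so $Q(\cdot,a)$ has no real zero on $[0,\infty)\setminus\{1\}$. For $\sigma<0$ I would apply the functional equation $(\ref{eq:zfe1q})$ with $s=1-\sigma>1$, obtaining
\begin{equation*}
Q(\sigma,a)=\frac{2\Gamma(1-\sigma)}{(2\pi)^{1-\sigma}}\,\sin\!\Big(\frac{\pi\sigma}{2}\Big)\,Q(1-\sigma,a),
\end{equation*}
using $\cos\big(\pi(1-\sigma)/2\big)=\sin(\pi\sigma/2)$. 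For $\sigma<0$ the factor $\Gamma(1-\sigma)$ is finite and positive and $Q(1-\sigma,a)>0$ by the previous step, so $Q(\sigma,a)=0$ if and only if $\sin(\pi\sigma/2)=0$, i.e. precisely at $\sigma=-2,-4,-6,\dots$; these zeros are simple because $\sin(\pi\sigma/2)$ vanishes simply there while the prefactor is smooth and nonzero. This confirms the $\zeta$-like behaviour off $(0,1)$ for every $a$, and together with the dichotomy furnished by Theorem~\ref{th:m1} it yields the stated equivalence.
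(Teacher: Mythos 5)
Your proof is correct, and its skeleton (reduce everything to the interval $(0,1)$ via Theorem \ref{th:m1}, use $Q(0,a)=-1/2$, positivity of $Q(\sigma,a)$ for $\sigma>1$, and then the functional equation \eqref{eq:zfe1q} with $\cos(\pi(1-\sigma)/2)=\sin(\pi\sigma/2)$ to locate the simple zeros at the negative even integers) is exactly the paper's. The one genuinely different ingredient is your proof of the key positivity lemma for $\sigma>1$. You assert that this ``is not term-by-term positive'' and therefore pass to theta functions; in fact it is term-by-term positive after the natural pairing, which is precisely what the paper does in \eqref{eq:posi1}: bounding $P(\sigma,a)\ge -2\zeta(\sigma)$ via $|\cos 2\pi na|\le 1$ and pairing with $Z(\sigma,a)$ gives
\begin{equation*}
2Q(\sigma,a) > \sum_{n=0}^\infty \left( \frac{1}{(n+a)^\sigma} + \frac{1}{(n+1-a)^\sigma} - \frac{2}{(n+1)^\sigma} \right),
\end{equation*}
and each summand is positive by convexity of $x\mapsto x^{-\sigma}$, since $(n+a)^{-\sigma}+(n+1-a)^{-\sigma}\ge 2(n+\tfrac12)^{-\sigma}>2(n+1)^{-\sigma}$. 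So the ``main obstacle'' you identify has a two-line solution. That said, your theta-function route is valid as written: the Mellin representation and the Poisson identity $2\sum_{n\ge1}\cos(2\pi na)e^{-\pi n^2 t}=t^{-1/2}\theta(1/t)-1$ are correct, your $t\ge 1$ bound works since $e^{3\pi t/4}\ge e^{3\pi/4}\approx 10.5 > 2/(1-e^{-\pi})\approx 2.1$, and your $t\le 1$ estimate does yield $\Phi(t)\ge 1-4\sum_{n\ge1}e^{-\pi n^2}\approx 0.83>0$ via the monotonicity of $t^{-1/2}e^{-\pi n^2/t}$ on $(0,1]$. What your heavier argument buys is the pointwise positivity of the theta kernel $\Phi(t)=\theta(t)+t^{-1/2}\theta(1/t)-1$, a strictly stronger statement tied to the functional equation; incidentally, the identity $\Phi(1/t)=t^{1/2}\Phi(t)+t^{1/2}-1$ shows your $t\le1$ case follows at once from the $t\ge1$ case, so half of your crux computation is dispensable. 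One cosmetic difference: the paper's proof only spells out the ``if'' direction, leaving the ``only if'' implicit in Theorem \ref{th:m1}(1),(3), whereas you make both directions explicit, which is cleaner.
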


For non-real zeros of $Q(s,a)$, we have the following propositions. We first consider the cases $a= 1/6$, $1/4$, $1/3$ or $1/2$.
\begin{proposition}\label{pro:1}
Suppose that $a= 1/6$, $1/4$, $1/3$ or $1/2$. Then the Riemann hypothesis is true if and only if all non-real zeros of $Q(s,a)$ are on the critical line $\sigma = 1/2$.
\end{proposition}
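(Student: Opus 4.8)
The plan is to show that, for each of the four exceptional values, $2Q(s,a)$ factors as $\zeta(s)$ times an explicit Dirichlet polynomial all of whose zeros lie on the line $\sigma=1/2$; the stated equivalence is then immediate.

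\emph{Step 1 (factorization).} For $a=1/q$ with $q\in\{2,3,4,6\}$ I would combine the distribution relation $\sum_{k=1}^{q}\zeta(s,k/q)=q^{s}\zeta(s)$ with the expansion ${\rm{Li}}_s(e^{2\pi ik/q})=q^{-s}\sum_{r=1}^{q}e^{2\pi ikr/q}\zeta(s,r/q)$ and the elementary evaluations $\zeta(s,1/2)=(2^{s}-1)\zeta(s)$, ${\rm{Li}}_s(-1)=-(1-2^{1-s})\zeta(s)$. These are precisely the moduli for which a direct computation leaves only $\zeta(s)$ (the nontrivial Dirichlet characters mod $q$, being odd, drop out of the $a\mapsto 1-a$ symmetric construction of $Q$), yielding
\[
2Q(s,a)=c_a\,\zeta(s)\bigl(F_a(s)+F_a(1-s)\bigr),
\]
with a positive constant $c_a$ (equal to $1$ except $c_{1/2}=2$) and
\[
F_{1/2}(s)=2^{s}-1,\quad F_{1/3}(s)=3^{s}-1,\quad F_{1/4}(s)=2^{s}(2^{s}-1),\quad F_{1/6}(s)=(2^{s}-1)(3^{s}-1).
\]
The constant $c_a$ plays no role below. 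As a consistency check, $c_a\bigl(F_a(0)+F_a(1)\bigr)=2$ in each case, so $2Q(0,a)=\zeta(0)\cdot 2=-1$, recovering $Q(0,a)=-1/2$.

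\emph{Step 2 (the crux: the zeros of the elementary factor).} Put $R_a(s)=F_a(s)+F_a(1-s)$; it suffices to prove that every zero of $R_a$ lies on $\sigma=1/2$. Since $R_a$ is manifestly symmetric, $R_a(1-s)=R_a(s)$, it is enough to rule out zeros with $\sigma>1/2$, where a zero would force $F_a(s)=-F_a(1-s)$. I would therefore establish the strict inequality $|F_a(s)|>|F_a(1-s)|$ for all $s$ with $\sigma>1/2$, which prevents this. As each $F_a$ is a product of monomials $k^{s}$ and factors $k^{s}-1$ with $k\in\{2,3\}$, the claim reduces to $2^{\sigma}>2^{1-\sigma}$ (clear) and to the key lemma
\[
|k^{\sigma+it}-1|>|k^{(1-\sigma)+it}-1|\qquad(k\ge 2,\ \sigma>\tfrac12,\ t\in{\mathbb{R}}),
\]
which follows from the identity
\[
|k^{\sigma+it}-1|^{2}-|k^{(1-\sigma)+it}-1|^{2}=\bigl(k^{\sigma}-k^{1-\sigma}\bigr)\bigl(k^{\sigma}+k^{1-\sigma}-2\cos(t\log k)\bigr),
\]
whose right-hand side is positive because $k^{\sigma}-k^{1-\sigma}>0$ for $\sigma>1/2$ while $k^{\sigma}+k^{1-\sigma}\ge 2\sqrt{k}\ge 2\sqrt{2}>2\ge 2\cos(t\log k)$. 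Specialising the same identity to real $s$ shows $R_a(\sigma)>0$ for all real $\sigma$, so $R_a$ has no real zero.

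\emph{Step 3 (conclusion).} By Steps 1 and 2 the non-real zeros of $Q(s,a)$ consist precisely of the non-real (equivalently, by Theorem~A, nontrivial) zeros of $\zeta(s)$ together with the zeros of $R_a$, all of the latter lying on $\sigma=1/2$. Hence the Riemann hypothesis forces every non-real zero of $Q(s,a)$ onto the critical line. Conversely, a zero of $R_a$ can never cancel an off-line zero of $\zeta$, so a nontrivial zero of $\zeta$ off the line would be an off-line non-real zero of $Q(s,a)$; thus if all non-real zeros of $Q(s,a)$ lie on $\sigma=1/2$, then $\zeta$ has no nontrivial zero off the line, i.e. the Riemann hypothesis holds. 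The only genuine work is the modulus lemma of Step 2, and the point to watch is that $|F_a(s)|>|F_a(1-s)|$ must hold for \emph{all} $\sigma>1/2$ rather than merely in a bounded strip; the bound $k^{\sigma}+k^{1-\sigma}\ge 2\sqrt{k}$ secures this uniformly.
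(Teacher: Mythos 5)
Your proposal is correct, and its Step 1 is exactly the paper's starting point: the factorizations $Z(s,a)=c_aF_a(s)\zeta(s)$ and $P(s,a)=c_aF_a(1-s)\zeta(s)$ for $a=1/2,1/3,1/4,1/6$ are precisely \eqref{eq:Q1/2}--\eqref{eq:Q1/6} (quoted there from earlier work), and your Step 3 is the same routine deduction the paper leaves implicit. Where you genuinely diverge is Step 2. The paper treats the elementary factor $F_a(s)+F_a(1-s)$ case by case: for $a=1/2,1/3,1/4$ it substitutes $X=2^s$ or $Y=3^s$, solves the resulting quadratic or quartic explicitly ($X=1\pm i$, $Y=1\pm i\sqrt{2}$, and four quartic roots of modulus $\sqrt{2}$), and reads off $\sigma=1/2$ from the moduli of the roots; only for $a=1/6$, where no single-variable polynomial substitution is available, does it argue as you do, via the modulus of $g_p(s)=(p^s-1)/(p^{1-s}-1)$, showing $|g_p(s)|=1$ on the line, $|g_p(s)|>1/2$ for $\sigma>1/2$ and $|g_p(s)|<1/2$ for $\sigma<1/2$, whence $g_2(s)+g_3(1-s)\neq 0$ off the line. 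Your key lemma $|k^{\sigma+it}-1|>|k^{(1-\sigma)+it}-1|$ for $k\ge 2$, $\sigma>1/2$, proved by the factorized identity $\bigl(k^{\sigma}-k^{1-\sigma}\bigr)\bigl(k^{\sigma}+k^{1-\sigma}-2\cos(t\log k)\bigr)>0$, is exactly the statement $|g_k(s)|>1$ for $\sigma>1/2$ --- a sharpened, uniform version of the paper's $a=1/6$ device that subsumes the three explicit-root computations at once (the extra monomial $2^s$ in $F_{1/4}$ being handled by $2^{\sigma}>2^{1-\sigma}$, as you note). What you lose relative to the paper is the explicit location of the zeros of the elementary factor (e.g.\ $s=\tfrac12+i(\pm\pi/4+2\pi m)/\log 2$ for $a=1/2$); what you gain is uniformity and generality, since your argument applies verbatim to any product of factors $k^s$ and $k^s-1$ with $k\ge 2$, where explicit root-finding would not scale. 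Your side remarks are also sound: the consistency check $c_a(F_a(0)+F_a(1))=2$ recovers $Q(0,a)=-1/2$ in agreement with \eqref{eq:Q-1/2}, and the positivity $F_a(\sigma)+F_a(1-\sigma)>0$ on the real line, though not needed for the proposition (which concerns only non-real zeros), is consistent with the paper's Lemma \ref{lem:2} and inequality \eqref{eq:posi1}.
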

On the other hand, we can see that $Q(s,a)$ with $a \in {\mathbb{Q}} \cap (0,1/2) \setminus \{1/6, 1/4, 1/3\}$ does not satisfy an analogue of the Riemann hypothesis. 
\begin{proposition}\label{pro:zero1}
Let $a \in {\mathbb{Q}} \cap (0,1/2) \setminus \{1/6, 1/4, 1/3\}$. Then for any $\delta >0$, there exist positive constants $C_a^\flat (\delta)$ and $C_a^\sharp (\delta)$ such that the function $Q(s,a)$ has more than $C_a^\flat (\delta) T$ and less than $C_a^\sharp (\delta)T$ complex zeros in the rectangles $1 < \sigma < 1+\delta$ and $0 < t < T$, and $-\delta < \sigma < 0$ and $0 < t < T$ if $T$ is sufficiently large. Furthermore, for any $1/2 < \sigma_1 < \sigma_2 <1$, there are positive numbers $C_a^\flat (\sigma_1,\sigma_2)$ and $C_a^\sharp (\sigma_1,\sigma_2)$ such that $Q(s,a)$ has more than $C_a^\flat (\sigma_1,\sigma_2) T$ and less than $C_a^\sharp (\sigma_1,\sigma_2) T$ non-trivial zeros in the rectangles $\sigma_1 < \sigma < \sigma_2$ and $0 < t < T$, $1-\sigma_2 < \sigma < 1-\sigma_1$ and $0 < t < T$ when $T$ is sufficiently large.
\end{proposition}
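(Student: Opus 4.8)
The plan is to exploit the fact that for a rational $a=p/q$ (in lowest terms) the function $Q(s,a)$ is a finite linear combination of Dirichlet $L$-functions to moduli dividing $q$. First I would record the standard identity, valid for $\gcd(p,q)=1$,
\[
\zeta (s,p/q)+\zeta \bigl( s,1-p/q \bigr)=\frac{2q^{s}}{\phi (q)}\sum_{\substack{\chi \bmod q \\ \chi(-1)=1}}\overline{\chi}(p)\,L(s,\chi),
\]
together with the companion expansion of ${\rm{Li}}_s(e^{2\pi i p/q})+{\rm{Li}}_s(e^{-2\pi i p/q})=2\sum_{n\ge 1}\cos (2\pi np/q)\,n^{-s}$ into $L$-functions via the Gauss-sum expansion of additive characters; together these exhibit $Q(s,a)$ as such a finite combination with coefficients built from powers $d^{\pm s}$, $d\mid q$. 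The decisive structural point, which I would verify case by case, is that for $a\in\mathbb Q\cap (0,1/2)\setminus\{1/6,1/4,1/3\}$ (the endpoint $a=1/2$ being excluded from the open interval) this combination is genuinely non-monomial, i.e.\ not a constant multiple of a single primitive $L(s,\chi)$, so that $Q(s,a)$ carries no Euler product. The four values $a=1/6,1/4,1/3,1/2$ are precisely those for which the coefficients conspire and $Q(s,a)$ collapses to $\zeta(s)$ times a factor whose zeros all lie on $\Re s=1/2$; this is the content of Proposition~\ref{pro:1}, and it is why they must be removed here.

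Next I would use the functional equation \eqref{eq:zfe1q} to reduce the two left-hand regions to the two right-hand ones. Because $Q(s,a)$ has real Dirichlet coefficients we have $Q(\overline s,a)=\overline{Q(s,a)}$, so zeros occur in conjugate pairs and the count in $0<t<T$ equals the count in $-T<t<0$. Since the factor $2\Gamma(s)(2\pi)^{-s}\cos(\pi s/2)$ is holomorphic and zero-free in the upper half-plane $t>0$, the map $s\mapsto 1-s$ followed by conjugation gives a bijection between the zeros of $Q(s,a)$ in $\{-\delta<\sigma<0,\ 0<t<T\}$ and those in $\{1<\sigma<1+\delta,\ 0<t<T\}$, and likewise between $\{1-\sigma_2<\sigma<1-\sigma_1\}$ and $\{\sigma_1<\sigma<\sigma_2\}$. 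It therefore suffices to treat the two regions $1<\sigma<1+\delta$ and $\sigma_1<\sigma<\sigma_2\subset(1/2,1)$.

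In the region of absolute convergence I would obtain the upper bound $\ll T$ from Littlewood's lemma applied to the rectangle, using that $Q(\cdot,a)$ is of finite order there and that $\int_0^T\log|Q(\sigma+it,a)|\,dt\ll T$. For the lower bound I would invoke Bohr's theory of almost periodic general Dirichlet series: on a line $\sigma=\sigma_0\in(1,1+\delta)$ the function $t\mapsto Q(\sigma_0+it,a)$ is uniformly almost periodic, and by Kronecker's theorem applied to a $\mathbb Q$-independent subfamily of the frequencies $\{\log(k+p/q),\log(k+1-p/q),\log m\}$ (the remaining terms absorbed as a controlled perturbation) the closure of its value set is a full planar region whose outer radius is the sum of the moduli of the individual terms. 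This sum diverges as $\sigma_0\downarrow 1$ (behaving like the harmonic series) while the leading term $a^{-\sigma_0}$ remains bounded, so for $\sigma_0$ sufficiently close to $1$ the origin lies in the interior, producing a genuine zero $s_0$; almost periodicity then yields a relatively dense set of heights $\tau$ along which $Q(\cdot,a)$ is uniformly close to its translate near $s_0$, and Rouch\'e's theorem turns each into a nearby zero, giving $\gg T$ zeros. Non-monomiality from the first step is exactly what forces $0$ to be interior rather than on the boundary.

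The strip $\sigma_1<\sigma<\sigma_2\subset(1/2,1)$ carries the real difficulty and is the main obstacle. The upper bound again follows from Littlewood's lemma together with the second-moment estimate $\int_0^T|Q(\sigma+it,a)|^2\,dt\ll_\sigma T$, which holds for $\sigma>1/2$. For the lower bound the defining series no longer converges, so $Q(\cdot,a)$ is not almost periodic and the previous argument fails; instead I would approximate $Q(s,a)$ in mean square by a Dirichlet polynomial $Q_N(s,a)$, apply the Bohr--Kronecker machinery (equivalently the Bohr--Jessen value distribution, or Voronin joint universality of the $L(s,\chi)$) to $Q_N$ so as to place $0$ in the interior of its non-degenerate limiting value distribution along $\gg T$ heights, and then transfer these to genuine zeros of $Q$ by the argument principle, using the mean-square bound on $Q-Q_N$ to show that the set of heights where the two differ appreciably has vanishing density. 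Making this exceptional set small enough that a Rouch\'e-type comparison survives on a positive-density set of heights is the delicate point; it is precisely here that the rationality of $a$, which supplies the clean $L$-function structure and the non-degenerate value distribution, and the removal of the degenerate values $1/6,1/4,1/3$ are used in an essential way.
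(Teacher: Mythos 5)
Your overall skeleton matches the paper's: the paper proves this proposition by combining the $L$-function decomposition \eqref{eq:qq}, the functional equation \eqref{eq:zfe1q} to reflect the regions $1<\sigma<1+\delta$ and $\sigma_1<\sigma<\sigma_2$ into $-\delta<\sigma<0$ and $1-\sigma_2<\sigma<1-\sigma_1$, the Bohr--Landau method with the mean-square estimates of \cite[Theorem 4.2.1]{LauGa} for the upper bounds, and then two citations for the lower bounds: the Corollary of Saias--Weingartner \cite{SW} for the region of absolute convergence and Theorem 2 of Kaczorowski--Kulas \cite{KaczorowskiKulas} for the strip. Your decomposition, your reflection argument, and your upper-bound treatment are all sound and agree with the paper's route.

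The genuine gap is in your lower bound for $1<\sigma<1+\delta$, where you attempt to reprove the Saias--Weingartner input from scratch. Your mechanism --- apply Kronecker's theorem to a $\mathbb{Q}$-independent subfamily of the frequencies, absorb the rest as a ``controlled perturbation,'' and conclude that the closure of the value set is a region of outer radius equal to the sum of the moduli of the individual terms --- is invalid. After clearing the $q^s$ factor, $Q(s,r/q)$ is a Dirichlet series with periodic coefficients and frequencies $\log n$, which are rationally dependent whenever $n$ is composite; only the prime logarithms are independent, so the terms cannot be rotated individually, and the dependent terms are forced to move in lockstep with the independent ones. Nor can they be treated as a small perturbation: as $\sigma_0\downarrow 1$ their contribution is exactly the part that diverges. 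The claim is refuted by $\zeta(s)$ itself: $\sum_n n^{-\sigma_0}\to\infty$ as $\sigma_0\downarrow 1$, yet $|\zeta(\sigma_0+it)|\ge \zeta(2\sigma_0)/\zeta(\sigma_0)>0$, so the origin never enters the closure of the value set. Your caveat that non-monomiality ``forces $0$ to be interior'' does not repair this, because your quantitative mechanism never uses non-monomiality; the correct argument (Davenport--Heilbronn, refined in \cite{SW}) applies Kronecker to the prime phases $p^{-it}$ acting on the finitely many Euler products $L(s,\chi)$ appearing in \eqref{eq:qq}, and exploits the presence of at least two inequivalent characters (which is exactly what fails for $a=1/6,1/4,1/3,1/2$) to twist the combination to zero. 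A smaller but related deficiency: for the strip $\sigma_1<\sigma<\sigma_2$, the positive-density Rouch\'e transfer you flag as ``the delicate point'' is the entire substance of \cite[Theorem 2]{KaczorowskiKulas}; your outline of it is the right approach but, as written, it is a restatement of the cited theorem rather than a proof of it.
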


Next we consider Lerch's formula for $Q(s,a)$ as a analogue of (\ref{eq:lerch1}). We remark that we have $Q(0,a) = -1/2 = \zeta (0)$ by (\ref{eq:Q-1/2}). 
\begin{theorem}\label{th:lerch1}
Let $\psi (a)$ be the digamma function. Then one has
\begin{equation}\label{eq:thmlerch1}
Q' (0,a) = \frac{1}{4} \Bigl( - 2 \log (\sin \pi a) - 2 \log 4\pi - 2\gamma_E -\psi (a) -\psi (1-a) \Bigr). 
\end{equation}
\end{theorem}
We have the following corollary. The equation (\ref{eq:corlerch1}) below gives the approximation of $Q(s,a)$ near $s=1$. It should be noted that the right hand side of (\ref{eq:kro1RZ}) and (\ref{eq:kro1}) below completely coincide. The second statement is an asymptotic expansion of $Q'(0,a)$ as $a \to +0$. The equation (\ref{eq:corlerch1}) implies that $Q' (0,a)$ is written by only the logarithm and trigonometric functions when $0 < a \le 1/2$ is rational.
\begin{corollary}\label{cor:lerch1}
We have
\begin{equation}\label{eq:kro1}
\biggl[ Q(s,a) -\frac{1}{s-1} \biggr]_{s=1} - 2 Q' (0,a) = \gamma_E + \log 2\pi.
\end{equation}
One has $Q' (0,a) \to \infty$ when $a \to +0$. More precisely,
\begin{equation}\label{eq:corasym1}
Q' (0,a) = \frac{1}{4} \biggl( \frac{1}{a} - 2 \log a \biggr) + O(1), \qquad a \to +0.
\end{equation}
Let $r$ and $q$ be relatively prime natural numbers and $0 < r/q \le 1/2$. Then we have
\begin{equation}\label{eq:corlerch1}
Q' (0,r/q) = \frac{1}{2} \biggl( \log \frac{q}{4\pi} -  \log \Bigl( 2\sin \frac{\pi r}{q} \Bigr) - 
\sum_{n=1}^{q-1} \cos \frac{2\pi rn}{q} \log \Bigl( 2 \sin \frac{\pi n}{q} \Bigr) \biggr) .
\end{equation}
\end{corollary}

For $0 <a \le 1/2$, define the function $\xi_Q(s,a)$ by
\begin{equation*}\label{eq:ComQ1}
2\xi_Q(s,a) := s(s-1) \pi^{-s/2} \Gamma (s/2) Q(s,a) .
\end{equation*}
The value $Q' (0,a) $ plays important role in the following Hadamard product formula for $\xi_Q(s,a)$. Note that the constant $B$ in (\ref{eq:hadpro1RZ}) and $B(a)$ given by (\ref{eq:hadpro1Q}) completely coincide if we replace $\zeta (s)$ by $Q(s,a)$. Moreover, we can show an approximate formula for $Q' (s,a) / Q(s,a)$ in terms of zeros of $Q(s,a)$ near to $s$ in Proposition \ref{cor:corHadpro1} as a consequence of the Hadamard product formula for $\xi_Q(s,a)$.
\begin{proposition}\label{pro:Hadpro1}
Let $0 < a \le 1/2$ and $\rho_a$ be the zeros of $\xi_Q (s,a)$. Then we have
\begin{equation*}\label{eq:hadpro1}
\xi_Q (s,a) = e^{A+B(a)s} \prod_{\rho_a} \Bigl( 1 - \frac{s}{\rho_a} \Bigr) e^{s/\rho_a},
\end{equation*}
where $A$ and $B(a)$ are defined as
\begin{equation}\label{eq:hadpro1Q}
e^A  = \frac{1}{2}, \quad \mbox{ and } \quad B(a) = \frac{Q' (0,a)}{Q (0,a)} - 1 - \frac{\gamma_E+\log \pi}{2}.
\end{equation}
\end{proposition}

Furthermore, we have the following Riemann-von Mangoldt formula for $Q(s,a)$.
\begin{proposition}\label{th:czaQAB1}
For $0 <a \le 1/2$ and $T>2$, we have
$$
N\bigl( T, Q(s,a)\bigr) = \frac{T}{\pi} \log \frac{T}{2\pi} - \frac{T}{\pi} - \frac{2 T}{\pi} \log a + O_a(\log T).
$$
\end{proposition}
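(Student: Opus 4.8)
The plan is to imitate the classical proof of the Riemann--von Mangoldt formula via the argument principle, with one genuinely new contribution coming from the fact that, unlike $\zeta(s)$, the function $Q(s,a)$ does not tend to a constant as $\sigma\to+\infty$. Since the Dirichlet coefficients of $Q(s,a)$ are real, one has $Q(\bar s,a)=\overline{Q(s,a)}$, so the non-real zeros are symmetric about the real axis and it suffices to count zeros with $0<t<T$ and double the result. I would then introduce the completed function
$$\xi_Q(s):=\tfrac12 s(s-1)\pi^{-s/2}\Gamma(s/2)\,Q(s,a).$$
Because the functional equation (\ref{eq:zfe1q}) of $Q(s,a)$ is \emph{identical} in form to that of $\zeta(s)$, the same completing factor $H(s):=\tfrac12 s(s-1)\pi^{-s/2}\Gamma(s/2)$ works here: $\xi_Q$ is entire, real on the real axis, and satisfies $\xi_Q(s)=\xi_Q(1-s)$. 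Its zeros are exactly the non-trivial zeros of $Q(s,a)$, the trivial zeros at the negative even integers being cancelled by the poles of $\Gamma(s/2)$.

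Next I would localize the zeros to a bounded vertical strip. Since $0<a<1-a<1$, the term $a^{-s}$ dominates every other term, and a short estimate gives $2Q(s,a)=a^{-s}\bigl(1+g(s)\bigr)$ with $g(s)\to 0$ uniformly in $t$ as $\sigma\to+\infty$; hence there is $b>1$ with $Q(s,a)\neq0$ for $\sigma\ge b$, and by the functional equation all non-real zeros lie in $1-b<\sigma<b$. Applying the argument principle to $\xi_Q$ on the rectangle with vertices $1-b,\ b,\ b+iT,\ 1-b+iT$ and using the two symmetries of $\xi_Q$ reduces, exactly as in the classical case, to
$$\widetilde N(T,Q)=\frac1\pi\Delta_{\mathcal L}\arg\xi_Q+O(1),\qquad \mathcal L:\ b\to b+iT\to\tfrac12+iT,$$
where $\widetilde N$ counts zeros with $0<t<T$. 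Splitting $\arg\xi_Q=\arg H+\arg Q$, the factor $H$ is literally the classical gamma factor, so $\tfrac1\pi\Delta_{\mathcal L}\arg H$ equals the Riemann--Siegel contribution $\tfrac1\pi\vartheta(T)=\tfrac{T}{2\pi}\log\tfrac{T}{2\pi}-\tfrac{T}{2\pi}+O(\log T)$.

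The new term is $\tfrac1\pi\Delta_{\mathcal L}\arg Q$. On the vertical part $b\to b+iT$ the estimate $2Q=a^{-s}(1+g)$ with $|g|<1$ keeps $1+g$ in the right half-plane, so $\arg Q$ changes by $\arg a^{-(b+iT)}+O(1)=-T\log a+O(1)=T\log(1/a)+O(1)$; this is precisely what produces the $a^2$ in the final formula. On the horizontal part $b+iT\to\tfrac12+iT$ I expect the bound $O(\log T)$. Collecting the terms and using $\tfrac{T}{\pi}\log\tfrac1a=-\tfrac{T}{2\pi}\log a^2$ gives
$$\widetilde N(T,Q)=\frac{T}{2\pi}\log T-\frac{T}{2\pi}\log(2e\pi a^2)+O(\log T),$$
and doubling yields the asserted formula for $N(T,Q)$.

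The main obstacle is the horizontal bound $\Delta_{b+iT\to\frac12+iT}\arg Q=O(\log T)$, the analogue of $S(T)=O(\log T)$ for $\zeta$. This requires two ingredients: a polynomial-in-$t$ growth bound for $Q(s,a)$ throughout the strip, obtained from the polynomial growth of $\zeta(s,a)$ and $\mathrm{Li}_s(e^{2\pi i a})$ on $\sigma=b$, the functional equation on the left, and Phragm\'{e}n--Lindel\"{o}f convexity in between; and Jensen's inequality applied on discs centred near $b+iT$ to show that $Q(s,a)$ has only $O(\log T)$ zeros with $|t-T|\le1$, which in turn bounds the variation of $\arg Q$ along the horizontal segment by $O(\log T)$. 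The remaining steps---the asymptotic for $g(s)$, the cancellation of the trivial zeros, and the $O(1)$ boundary corrections arising from possible real zeros and the indentation at $s=1$---are routine.
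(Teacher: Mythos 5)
Your proposal is correct, and it reaches the main term by the same mechanism as the paper, but it handles the hardest technical step by a genuinely different route. Both arguments use the identical completed function $\xi_Q(s,a)=\tfrac12 s(s-1)\pi^{-s/2}\Gamma(s/2)Q(s,a)$ and both extract the new term $-\tfrac{T}{\pi}\log a^2$ from the dominance of $a^{-s}$ on the right edge: your factorization $2Q(s,a)=a^{-s}(1+g(s))$ with $|g|<1$ is the multiplicative form of the paper's splitting $Q'/Q=(a^{-s})'/a^{-s}+(a^{s}Q)'/(a^{s}Q)$ together with the lower bound $\Re\bigl(a^{\sigma_a+it}Q(\sigma_a+it,a)\bigr)>\theta$ (compare Lemma \ref{lem:nonzero1}). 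The divergence is in the horizontal-edge bound $O_a(\log T)$: the paper first proves that $\xi_Q$ is entire of order $1$ (Lemma \ref{lem:order1}), derives the Hadamard product (Proposition \ref{pro:Hadpro1}) and the partial-fraction expansion \eqref{eq:corhadpro1} of $Q'/Q$ together with the unit-interval zero count \eqref{eq:5.20}, and then bounds the horizontal integral $I_2$ by integrating $\sum_{|T-\gamma_a|\le 1}(s-\rho_a)^{-1}$ over an auxiliary rectangle; you instead use the Backlund--Titchmarsh method, i.e.\ polynomial growth of $Q$ in the strip (absolute convergence on $\sigma=b$, the functional equation \eqref{eq:zfe1q} on the left, Phragm\'en--Lindel\"of in between) plus Jensen's inequality on discs centred near $b+iT$ to get $O(\log T)$ zeros at height $\approx T$ and hence $\Delta\arg Q=O(\log T)$. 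Your route is more self-contained, avoiding Hadamard theory entirely; the paper's route costs the order computation but yields byproducts of independent use, notably the expansion \eqref{eq:corhadpro1} and the bound \eqref{eq:5.20}, which the paper also invokes to dispose of possible zeros on the line $\Im(s)=T$ (in your setup the Jensen-disc count serves the same purpose). Two small points to tidy up: for $a=1/2$ your inequality $0<a<1-a$ fails, but then $2Q(s,1/2)=2\cdot 2^{s}(1+g)$ and the argument is unchanged up to the harmless constant factor $2$; and the bottom edge of your rectangle lies on the real axis, where $\xi_Q$ has real zeros in $(0,1)$ when $0<a<a_0$ by Theorem \ref{th:m1}, so the $O(1)$ correction you flag as routine is genuinely needed there (the paper absorbs it via Theorem \ref{th:m1} in \eqref{eq:little1}), while no indentation at $s=1$ is needed at all since $\xi_Q$ is entire and $\xi_Q(1,a)\ne 0$.
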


In the next subsection, we give some remarks for Theorem \ref{th:m1}, Corollary \ref{cor:1} and \ref{cor:lerch1}, Propositions \ref{pro:zero1} and \ref{th:czaQAB1}, and the Epstein zeta function $\zeta_B(s)$ in order to explain that the properties of $Q(s,a)$ are ``better'' than those of $\zeta_B(s)$ .

\subsection{The Epstein zeta function }
As we mention below, the quadrilateral zeta function $Q(s,a)$ has many analytical properties in common with the Epstein zeta function $\zeta_B(s)$ defined below, for example, the functional equation (\ref{eq:feEp}), the Riemann-von Mangoldt formula (\ref{eq:RvMEp1}) and zeros on the critical line (\cite[Theorem 1.2]{Na}). 
 
Let $B(x,y)= a x^2 + bxy + c y^2$ be a positive definite integral binary quadratic form, and denote by $r_{\!B}(n)$ the number of solutions of the equation $B(x,y) = n$ in integers $x$ and $y$. Then the Epstein zeta function for the binary quadratic form $B$ is defined by the series
\begin{equation*}\label{eq:defEp}
\zeta_B(s) := \sum_{(x,y) \in {\mathbb{Z}}^2\setminus (0,0)} \frac{1}{B(x,y)^s}
= \sum_{n=1}^\infty \frac{r_{\!B}(n)}{n^s}
\end{equation*}
for $\sigma >1$. It is widely know that the function $\zeta_B(s)$ admits analytic continuation into the entire complex plane
except for a simple pole at s = 1 with residue $2\pi (-D)^{-1/2}$, where $D := b^2-4ac<0$. Moreover, the function $\zeta_B(s)$ fulfills the functional equation
\begin{equation}\label{eq:feEp}
\Bigl( \frac{\sqrt{-D}}{2\pi} \Bigr)^s \Gamma (s) \zeta_B(s) = 
\Bigl( \frac{\sqrt{-D}}{2\pi} \Bigr)^{1-s} \Gamma (1-s) \zeta_B(1-s) .
\end{equation}

Denoted by $N_{\rm{Ep}}^{\rm{CL}} (T)$ the number of the zeros of the Epstein zeta function $\zeta_B (s)$ on the critical line and whose imaginary part is smaller than $T>0$. Potter and Titchmarsh \cite{PT} showed $N_{\rm{Ep}}^{\rm{CL}} (T) \gg T^{1/2-\varepsilon}$. The current (June, 2022) best result is $N_{\rm{Ep}}^{\rm{CL}} (T) \gg T^{4/7-\varepsilon}$, shown by Baier, Srinivas and Sangale \cite{BSS}.

The distribution of zeros of $\zeta_B(s)$ off the critical line depends on the value of the class number $h(D)$ of the imaginary quadratic field  ${\mathbb{Q}} (\sqrt{D})$. If $h(D)=1$, it is expected that $\zeta_B(s)$ satisfies an analogue of the Riemann hypothesis since $\zeta_B(s)$ has an Euler product. When $h(D) \ge 2$, Davenport and Heilbronn \cite{DH} proved that $\zeta_B(s)$ has infinitely many zeros in the region of absolute convergence $\Re(s) > 1$. Voronin \cite{Vo} proved that if $h(D) \ge 2$, then there exists a positive constant $C_B^\flat (\sigma_1,\sigma_2) $ such that the function $\zeta_B(s)$ has more than $C_B^\flat (\sigma_1,\sigma_2) T$ non-trivial zeros in the rectangle $\sigma_1 < \sigma < \sigma_2$ and $0 < t < T$, when $T$ is sufficiently large (see also \cite[Theorem 7.4.3]{KV}). Recently, for example, Lee \cite{Lee} and Lamzouri \cite{Lam} improve Voronin's theorem. 

Hereafter, let $B(x,y)= a x^2 + bxy + c y^2$ be a positive definite binary quadratic form, namely, $a,b,c \in {\mathbb{R}}$, $a>0$ and $d:= b^2-4ac < 0$. And define $\kappa >0$ by putting
$$
\kappa^2 := \frac{|d|}{4a^2} = \frac{4ac-b^2}{4a^2} = \frac{c}{a} - \Bigl( \frac{b}{2a} \Bigr)^2. 
$$
Bateman and Grosswald \cite{BG} showed the following theorem. It should be noted that this result was announced by Chowla and Selberg \cite{CS} without a proof. 
\begin{thm}
Let $\kappa \ge \sqrt{3}/2$. Then one has
$$
\zeta_B(1/2) > 0 \quad \mbox{if} \quad \kappa \ge 7.00556 
$$
(or if $4\kappa^2 = |d|/a^2 \ge 199.2$) but,
$$
\zeta_B(1/2) < 0 \quad \mbox{if} \quad \sqrt{3}/2 \le \kappa \le 7.00554
$$
(or if $3 \le 4\kappa^2 = |d|/a^2 \le 199.1$). 
\end{thm}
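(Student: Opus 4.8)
The plan is to obtain an exact closed form for $\zeta_B(1/2)$ from the Chowla--Selberg (Epstein) Fourier expansion and then read the sign off its dominant term. Put $\tau := \tfrac{b}{2a} + i\kappa$, so that $\Im \tau = \kappa$ and $B(x,y) = a\,|x+y\tau|^2$; hence $\zeta_B(s) = a^{-s}\sum_{(x,y)\neq(0,0)}|x+y\tau|^{-2s}$. First I would isolate the terms with $y=0$, which contribute $2\zeta(2s)$, and evaluate the inner sum over $x$ for each fixed $y\neq0$ by the Poisson (theta) transformation --- equivalently, inserting $A^{-s} = \Gamma(s)^{-1}\int_0^\infty t^{s-1}e^{-At}\,dt$ and applying Jacobi's identity $\sum_x e^{-(x+\beta)^2 t} = \sqrt{\pi/t}\sum_k e^{-\pi^2 k^2/t + 2\pi i k\beta}$. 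With $\sigma_\nu(n):=\sum_{e\mid n}e^\nu$ and $K_\nu$ the Macdonald function, this yields, for all $s$ by analytic continuation,
\begin{equation*}
\sum_{(x,y)\neq(0,0)}|x+y\tau|^{-2s} = 2\zeta(2s) + \frac{2\sqrt\pi\,\Gamma(s-1/2)}{\Gamma(s)}\zeta(2s-1)\kappa^{1-2s} + \frac{8\pi^s}{\Gamma(s)}\kappa^{1/2-s}\sum_{n\ge1}\sigma_{1-2s}(n)\,n^{s-1/2}\cos\Bigl(\frac{\pi n b}{a}\Bigr)K_{s-1/2}(2\pi n\kappa).
\end{equation*}

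The substantive step is to let $s\to\tfrac12$. Both $2\zeta(2s)$ and $\Gamma(s-\tfrac12)$ have simple poles there, and I would show they cancel exactly. Writing $s=\tfrac12+\varepsilon$ and using $\zeta(1+2\varepsilon)=\tfrac{1}{2\varepsilon}+\gamma+O(\varepsilon)$ (with $\gamma$ Euler's constant), $\Gamma(\varepsilon)=\tfrac1\varepsilon-\gamma+O(\varepsilon)$, $\Gamma'(\tfrac12)/\Gamma(\tfrac12)=-\gamma-2\log2$, $\zeta(0)=-\tfrac12$ and $\zeta'(0)=-\tfrac12\log(2\pi)$, the $\varepsilon^{-1}$ parts cancel and the finite part of the first two terms equals $2\gamma + 2\log\tfrac{\kappa}{4\pi}$; in the Bessel sum one has $8\pi^{1/2}/\Gamma(\tfrac12)=8$, $n^{0}=1$, $\sigma_{1-2s}(n)\to\sigma_0(n)$, and $K_{s-1/2}\to K_0$. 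Therefore
\begin{equation*}
\zeta_B\Bigl(\tfrac12\Bigr) = a^{-1/2}\Bigl[\,2\gamma + 2\log\tfrac{\kappa}{4\pi} + 8\sum_{n\ge1}\sigma_0(n)\cos\Bigl(\frac{\pi n b}{a}\Bigr)K_0(2\pi n\kappa)\,\Bigr].
\end{equation*}
The main term $M(\kappa):=2\gamma+2\log\tfrac{\kappa}{4\pi}$ is strictly increasing and vanishes precisely at $\kappa_0 = 4\pi e^{-\gamma}=7.0555\ldots$ (so $4\kappa_0^2 = |d|/a^2 \approx 199.1$--$199.2$), which is the source of the threshold.

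It then remains to control the remainder. Bounding it in absolute value by $8\sum_{n\ge1}\sigma_0(n)K_0(2\pi n\kappa)$ and using that $K_0$ is positive, decreasing and satisfies $K_0(z)\sim\sqrt{\pi/2z}\,e^{-z}$, the hypothesis $\kappa\ge\sqrt3/2$ (so the smallest Bessel argument is $2\pi\kappa\ge\pi\sqrt3$) makes this a rapidly convergent series equal to $8K_0(2\pi\kappa)\bigl(1+O(e^{-2\pi\kappa})\bigr)$. For $\kappa\ge 7.0556$ one has $M(\kappa)\ge M(7.0556)>0$ while the remainder is $O(\kappa^{-1/2}e^{-2\pi\kappa})$, giving $\zeta_B(1/2)>0$; for $\sqrt3/2\le\kappa\le 7.0554$ one has $M(\kappa)\le M(7.0554)<0$, and the pointwise comparison $|M(\kappa)|>8\sum_{n}\sigma_0(n)K_0(2\pi n\kappa)$ holds comfortably across the interval --- at $\kappa=\sqrt3/2$ the left side is $\approx 4.2$ against a remainder $<0.02$, while near $\kappa_0$ the remainder has already shrunk to $\sim 10^{-19}$ --- giving $\zeta_B(1/2)<0$.

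The main obstacle is the clean, rigorous extraction of the finite part at $s=\tfrac12$: the exact threshold $\kappa_0=4\pi e^{-\gamma}$ hinges on pinning down the constant $2\gamma-2\log(4\pi)$ correctly after the two poles cancel, so the Laurent bookkeeping must be carried out exactly rather than to leading order. The subsequent uniform domination of the exponentially small Bessel remainder by the only logarithmically small main term is technically routine, but it is precisely this mismatch of scales that forces the explicit decimal buffer $7.0554$ / $7.0556$ rather than a statement sharp at $\kappa_0$.
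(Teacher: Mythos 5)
This statement is Theorem D of the paper, which is quoted from Bateman and Grosswald \cite{BG} (announced earlier by Chowla and Selberg) \emph{without proof}, so there is no internal argument to compare against; your proposal is, in substance, a correct reconstruction of the original Bateman--Grosswald proof. The Fourier--Bessel (Chowla--Selberg) expansion you write is the standard one, and your Laurent bookkeeping at $s=1/2$ checks out: with $\Gamma'(1/2)/\Gamma(1/2)=-\gamma-2\log 2$, $\zeta(0)=-1/2$, $\zeta'(0)=-\tfrac12\log(2\pi)$, the two simple poles cancel and one gets $a^{1/2}\zeta_B(1/2)=2\gamma+2\log\frac{\kappa}{4\pi}+8\sum_{n\ge 1}\sigma_0(n)\cos(\pi n b/a)\,K_0(2\pi n\kappa)$, whose main term vanishes exactly at $\kappa_0=4\pi e^{-\gamma}=7.05551\ldots$. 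Your numerics in fact expose a typo in the statement as printed: since $4\kappa_0^2\approx 199.12$ is what the parenthetical bounds $199.1$ and $199.2$ bracket, the decimals must read $7.0554$ and $7.0556$ (as in \cite{BG}), not $7.00554$ and $7.00556$; your buffers are the correct ones.

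One step is stated too quickly. On $[\sqrt{3}/2,\,7.0554]$ both $|M(\kappa)|$ and the Bessel remainder $R(\kappa):=8\sum_{n\ge1}\sigma_0(n)K_0(2\pi n\kappa)$ are decreasing, so your two endpoint checks do not by themselves give the pointwise inequality $|M(\kappa)|>R(\kappa)$ on the whole interval. A one-line split repairs it: on $[\sqrt{3}/2,\,3]$ one has $|M(\kappa)|\ge |M(3)|\approx 1.71$ while $R(\kappa)\le R(\sqrt{3}/2)<0.02$, and on $[3,\,7.0554]$ one has $|M(\kappa)|\ge |M(7.0554)|\approx 3\times 10^{-5}$ while $R(\kappa)\le R(3)\approx 2\times 10^{-8}$. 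For $\kappa\ge 7.0556$ monotonicity works in your favor ($M$ increasing, $R$ decreasing), so the single comparison $M(7.0556)\approx 2.6\times 10^{-5}$ against $R(7.0556)\sim 10^{-19}$ suffices, as you indicate. With that routine fix your argument is complete and is essentially the proof in the cited source.
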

There are no result for the sign of `the central value' $\zeta_B(1/2)$ when $7.00554 < \kappa < 7.00556$ at present. It should be mentioned that $\zeta_B(s)$ vanishes in the interval $(1/2,1)$ if $\kappa \ge 7.00556$ by the theorem above, the intermediate value theorem and $\lim_{\sigma \to 1-0} \zeta_B(\sigma) = -\infty$. Moreover, it is probable that $\zeta_B(\sigma) < 0$ for all $\sigma \in (0,1)$ if $\sqrt{3}/2 \le \kappa \le 7.00554$. 

Furthermore, we have the following Riemann-von Mangoldt formula for $\zeta_B(s)$ (see for example \cite[p.~692]{Steu06}). Denote by ${\rm{m}}(B)$ the minimum of the values of the quadratic form $B(x,y)$ for $(x,y) \ne (0,0)$. Then we have
\begin{equation}\label{eq:RvMEp1}
N\bigl( T, \zeta_B(s) \bigr) = \frac{2T}{\pi} \log \frac{|d|^{1/2}T}{2\pi e {\rm{m}}(B)} + O(\log T).
\end{equation}

\begin{remark}
It is known to be difficult to prove the non-vanishing of central values of zeta or $L$-functions. 
We can regard Theorem \ref{th:m1} as an analogue or ``improvement'' of Theorem B since Theorem \ref{th:m1} implies that
$$
Q(1/2,a) > 0 \quad \mbox{ if and only if } \quad 0 < a < a_0 .
$$
In other words, there is the unique absolute threshold $a_0$ that determines `the central value' $Q(1/2,a)$ is positive, negative or zero. However, for Epstein zeta functions, there does not exist such an absolute threshold since we have no result for the sign of $\zeta_B(1/2)$ when $7.00554 < \kappa < 7.00556$. 
\end{remark}

\begin{remark}
The first statement of Corollary \ref{cor:lerch1} is the analogue of the approximation of $\zeta_B (s)$ near $s=1$ (see \cite[Theorem 1]{Sie}). 
Proposition \ref{pro:zero1} can be regarded as an analogue of theorems proved by Davenport \& Heilbronn \cite{DH} and Voronin \cite{Vo}. Proposition \ref{th:czaQAB1} is an analogue of (\ref{eq:RvMEp1}) 
\end{remark}

\begin{remark}
It should be emphasised that one has $N_{\rm{Ep}}^{\rm{CL}} (T) \gg T^{4/7-\varepsilon}$ but $N_{\rm{Q}}^{\rm{CL}} (T) \gg T$. 
The values of $\zeta_B (s)$ at positive integers are given in \cite[Theorem 1]{Sm}. On the other hand, it is proved in \cite[Theorem 1.2]{NaIn} that $\pi^{-2n} Q(2n,a)$, where $n \in {\mathbb{N}}$, can be expressed as a rational function with rational coefficients of $e^{2\pi ia}$. 
\end{remark}

The rest of this paper is organized as follows. Section 2.1 is devoted to the proofs of Theorem \ref{th:m1} and Corollary \ref{cor:1}. The proofs of Propositions \ref{pro:1} and \ref{pro:zero1} are given in Section 2.2. We prove Theorem \ref{th:lerch1} and Corollary \ref{cor:lerch1} in Section 2.3. We give proofs of Propositions \ref{pro:Hadpro1} and \ref{th:czaQAB1} in Section 2.4 and 2.5, respectively.

\section{Proofs}

\subsection{Proofs of Theorem \ref{th:m1} and Corollary \ref{cor:1}}
For $0 <a \le 1/2$, put
$$
Z(s,a) := \zeta (s,a) + \zeta (s,1-a), \qquad P(s,a) := {\rm{Li}}_s (e^{2\pi ia}) + {\rm{Li}}_s (e^{2\pi i(1-a)}).
$$
Note that one has $2Q(s,a) = Z(s,a) + P(s,a)$. By \cite[Lemma 4.1]{Na}, we have 
\begin{equation}\label{eq:feZQ1}
Z(1-s,a) = \Gamma_{\!\! \cos} (s) P(s,a), \qquad
P(1-s,a) = \Gamma_{\!\! \cos} (s) Z(s,a), 
\end{equation}
where $\Gamma_{\!\! \cos} (s)$ is defined in (\ref{eq:zfe1q}). 
In \cite[Lemma 4.4]{Na}, it is shown that
$$
\frac{\partial}{\partial a} Z (\sigma,a) < 0, \qquad \frac{\partial}{\partial a} P (\sigma,a) < 0, \qquad 0 < \sigma <1. 
$$
Hence we have the following inequality.
\begin{lemma}\label{lem:1}
Let $0<a<1/2$. Then it holds that
\begin{equation}\label{eq:Qin1}
\frac{\partial}{\partial a} Q (\sigma,a) < 0, \qquad 0 < \sigma <1. 
\end{equation}
\end{lemma}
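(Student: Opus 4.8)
The plan is to exploit the additive decomposition $2Q(s,a) = Z(s,a) + P(s,a)$ recorded just above, so that the desired monotonicity of $Q$ in the parameter $a$ will follow immediately from the two monotonicity statements for $Z$ and $P$ quoted from \cite[Lemma 4.4]{Na}. First I would fix $\sigma \in (0,1)$ and observe that on this strip both $Z(\sigma,a)$ and $P(\sigma,a)$ are smooth functions of $a \in (0,1/2)$, since the analytic continuations of $\zeta(s,a)$, $\zeta(s,1-a)$ and of the periodic zeta function all depend smoothly on $a$ there; this legitimises applying $\partial/\partial a$ term by term to the sum defining $2Q$.

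Next I would differentiate the identity $2Q(\sigma,a) = Z(\sigma,a) + P(\sigma,a)$ with respect to $a$ to obtain
\begin{equation*}
2\frac{\partial}{\partial a} Q(\sigma,a) = \frac{\partial}{\partial a} Z(\sigma,a) + \frac{\partial}{\partial a} P(\sigma,a),
\end{equation*}
and then invoke the inequalities $\tfrac{\partial}{\partial a} Z(\sigma,a) < 0$ and $\tfrac{\partial}{\partial a} P(\sigma,a) < 0$, which hold for all $0 < \sigma < 1$. The right-hand side is thus a sum of two strictly negative quantities, hence itself strictly negative, and dividing by $2$ yields $\tfrac{\partial}{\partial a} Q(\sigma,a) < 0$ throughout $(0,1)$, which is precisely \eqref{eq:Qin1}.

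I do not expect any genuine obstacle here: the entire substance of the statement is packaged into the two monotonicity facts for the pieces $Z$ and $P$, which I am quoting rather than reproving. The only point deserving a word of care is the justification for differentiating the analytically continued functions $Z$ and $P$ in the parameter $a$, but this is routine given their smoothness on the open strip $0 < \sigma < 1$, so the argument reduces to the one-line addition displayed above.
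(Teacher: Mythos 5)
Your proposal is correct and matches the paper's own argument exactly: the paper likewise deduces \eqref{eq:Qin1} immediately from the decomposition $2Q(s,a) = Z(s,a) + P(s,a)$ together with the inequalities $\frac{\partial}{\partial a} Z(\sigma,a) < 0$ and $\frac{\partial}{\partial a} P(\sigma,a) < 0$ quoted from \cite[Lemma 4.4]{Na}. Your extra remark on the smoothness in $a$ of the analytic continuations is a harmless elaboration of a point the paper leaves implicit.
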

From \cite[Theorem 1.3 and Proposition 4.5]{NaZC}, we have the following.
\begin{lemma}\label{lem:2}
One has\\
{\rm{(1)}} Let $1/6 \le a  \le 1/2$. Then one has $Z(\sigma,a) < 0$ for $0<\sigma <1$. \\
{\rm{(2)}} When $0 < a < 1/6$, the function $Z(\sigma,a)$ has precisely one simple zero in $(0, 1)$. Let $\beta_Z(a)$ denote the unique zero of $Z(\sigma,a)$ in $(0,1)$. Then the function $\beta_Z(a) \, \colon (0, 1/6) \to (0, 1)$ is a strictly decreasing $C^\infty$-diffeomorphism.
\end{lemma}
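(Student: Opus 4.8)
My plan is to reduce both parts of the lemma to the study of a single smooth curve in the $(\sigma,a)$-plane, exploiting the monotonicity $\partial_a Z(\sigma,a)<0$ on $(0,1)$ recorded above. First I would pin down the behaviour of $Z(\sigma,a)$ at the two ends of the interval. At $\sigma=0$ one has $\zeta(0,a)=\tfrac12-a$, hence $Z(0,a)=0$ for every $a$; differentiating and using Lerch's formula $\partial_s\zeta(s,a)|_{s=0}=\log(\Gamma(a)/\sqrt{2\pi})$ together with the reflection formula $\Gamma(a)\Gamma(1-a)=\pi/\sin(\pi a)$ gives
$$
\frac{\partial Z}{\partial\sigma}(0,a)=\log\frac{\Gamma(a)\Gamma(1-a)}{2\pi}=-\log\bigl(2\sin(\pi a)\bigr),
$$
which is positive for $a<1/6$, zero at $a=1/6$, and negative for $a>1/6$. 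This already isolates the threshold $1/6$. At the other end the simple pole of $\zeta(s,a)$ at $s=1$ gives $Z(\sigma,a)\to-\infty$ as $\sigma\to1^-$ for each fixed $a\in(0,1/2)$.

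Next I would fix $\sigma\in(0,1)$ and vary $a$. Since $a\mapsto Z(\sigma,a)$ is continuous and strictly decreasing, and since $Z(\sigma,a)\to+\infty$ as $a\to0^+$ (because $\zeta(\sigma,a)\sim a^{-\sigma}$) while $Z(\sigma,1/2)=2(2^\sigma-1)\zeta(\sigma)<0$ on $(0,1)$, the intermediate value theorem yields a unique $\alpha(\sigma)\in(0,1/2)$ with $Z(\sigma,\alpha(\sigma))=0$. By the implicit function theorem (legitimate since $\partial_a Z\ne0$) the function $\alpha$ is $C^\infty$, with $\alpha'(\sigma)=-\partial_\sigma Z/\partial_a Z$. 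The whole lemma now becomes a statement about this single curve: for fixed $a$, the real zeros of $Z(\cdot,a)$ in $(0,1)$ are exactly the solutions of $\alpha(\sigma)=a$. Combining the derivative computation above with the pole analysis, I would check the boundary limits $\alpha(\sigma)\to1/6$ as $\sigma\to0^+$ and $\alpha(\sigma)\to0^+$ as $\sigma\to1^-$.

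The crux, and the step I expect to be the main obstacle, is to prove that $\alpha$ is strictly decreasing, equivalently that $\partial_\sigma Z<0$ at every zero of $Z$ in $(0,1)$, equivalently that $Z(\cdot,a)$ has no double real zero in $(0,1)$. Granting this, $\alpha\colon(0,1)\to(0,1/6)$ is a strictly decreasing $C^\infty$-bijection, and the lemma follows at once: for $a\in[1/6,1/2]$ the value $a$ is not attained by $\alpha$, and since then $a>\alpha(\sigma)$ with $Z$ decreasing in $a$ one gets $Z(\sigma,a)<0$ throughout $(0,1)$ (part (1)); for $a\in(0,1/6)$ there is the unique preimage $\beta_Z(a)=\alpha^{-1}(a)$, which is automatically a simple zero because $\partial_\sigma Z=-\alpha'\,\partial_a Z\ne0$ there, and $\beta_Z=\alpha^{-1}$ inherits from $\alpha$ the property of being a strictly decreasing $C^\infty$-diffeomorphism onto $(0,1)$ (part (2)). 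I would attack the monotonicity of $\alpha$ by ruling out interior critical points: a critical point of $\alpha$ forces $Z=\partial_\sigma Z=0$ simultaneously, so showing that this degenerate configuration is impossible for $a\in(0,1/6)$ makes $\alpha'$ nonvanishing, hence of constant sign, which the endpoint limits $1/6$ and $0$ then fix to be negative. Establishing the absence of such a double zero is the genuinely analytic input and would require either a unimodality argument for $\sigma\mapsto Z(\sigma,a)$ or a sign-change count via a suitable integral representation of $Z$.
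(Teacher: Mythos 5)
Your reduction to the zero-curve $\alpha(\sigma)$ is sound as far as it goes: the endpoint computations are correct ($Z(0,a)=0$, and via Lerch's formula $\frac{\partial Z}{\partial \sigma}(0,a)=-\log\bigl(2\sin (\pi a)\bigr)$, which correctly isolates the threshold $a=1/6$; the pole of $\zeta(s,a)$ at $s=1$ forces $Z(\sigma,a)\to-\infty$ as $\sigma\to 1^-$), the strict monotonicity $\frac{\partial}{\partial a}Z(\sigma,a)<0$ you invoke is exactly the input quoted in the paper from \cite[Lemma 4.4]{Na}, and the implicit-function-theorem bookkeeping is fine. But there is a genuine gap, and you name it yourself: you never prove that $Z(\cdot,a)$ has no double zero in $(0,1)$, equivalently that $\alpha'$ never vanishes. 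This is not a deferrable technicality --- it is the entire content of part (2). Without it, the boundary limits $\alpha(\sigma)\to 1/6$ and $\alpha(\sigma)\to 0$ together with the intermediate value theorem give only that $Z(\cdot,a)$ has \emph{at least} one zero in $(0,1)$ for $a\in(0,1/6)$; uniqueness, simplicity, and the claim that $\beta_Z$ is a strictly decreasing $C^\infty$-diffeomorphism all remain unproved. Moreover, as you have structured it, even part (1) is conditional on this missing step: your assertion that $a\in[1/6,1/2]$ is ``not attained by $\alpha$'' needs $\alpha(\sigma)<1/6$ for every $\sigma\in(0,1)$, which you extract from the unestablished monotonicity of $\alpha$. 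That particular piece admits a cheap unconditional fix you did not use: the factorization (\ref{eq:Q1/6}), $Z(s,1/6)=(2^s-1)(3^s-1)\zeta(s)$, together with $\zeta(\sigma)<0$ on $(0,1)$, gives $Z(\sigma,1/6)<0$ directly, and then $\frac{\partial}{\partial a}Z<0$ yields part (1) and the bound $\alpha(\sigma)<1/6$. No comparable shortcut is available for the no-double-zero statement.

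For comparison with the paper: it does not prove this lemma at all, but imports it wholesale from \cite[Theorem 1.3 and Proposition 4.5]{NaZC}, where precisely the one-zero count and the diffeomorphism property of $\beta_Z$ are established. So the analytic input you defer (``a unimodality argument or a sign-change count via a suitable integral representation'') is exactly the part of the cited result that carries all the weight; until you supply it, your proposal is a correct and well-organized framework, but not a proof.
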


\begin{proof}[Proof of Theorem \ref{th:m1}]
From Lemma \ref{lem:2}, there is the unique $0 < a_0 < 1/6$ such that $Z(1/2, a_0)=0$. By the functional equations (\ref{eq:feZQ1}), we have $P(1/2, a_0)=0$. Hence, from (\ref{eq:Qin1}), the exists the unique $0 < a_0 <1/6$ such that
$$
2Q(1/2, a_0) = Z(1/2, a_0) + P(1/2, a_0) = 0.
$$
It should be noted that the $0 < a_0 <1/6$ above is given by (\ref{eq:av1}) numerically. By the functional equation (\ref{eq:zfe1q}), we have
$$
Q (1/2-\varepsilon,a_0) Q (1/2+\varepsilon,a_0) \ge 0
$$
if $\varepsilon >0$ is sufficiently small. The inequality above implies that
\begin{equation}\label{eq:dev0}
Q' (1/2, a_0) = 0, \qquad Q' (\sigma, a_0) := \frac{d}{d \sigma} Q(\sigma, a_0) .
\end{equation}
Hereafter, we only consider the open interval $(0,1/2)$ in virtue of (\ref{eq:zfe1q}). We have 
\begin{equation}\label{eq:Q-1/2}
2 Q(0,a) = Z (0,a) + P(0,a) = 0 -1 = -1, \qquad 0 < a \le 1/2
\end{equation}
from \cite[(4.11) and (4.12)]{Na}. Put
$$
a_\flat := 0.11837513961 < a_0 < 0.11837513962 =: a_\sharp .
$$
Then we have the following ten figures by Mathematica 11.3.

\begin{figure}[htbp]
\begin{center}
 \begin{tabular}{c}
 \begin{minipage}{0.5\hsize}
  \begin{center}
	\includegraphics[height=3.5cm, width=7cm]{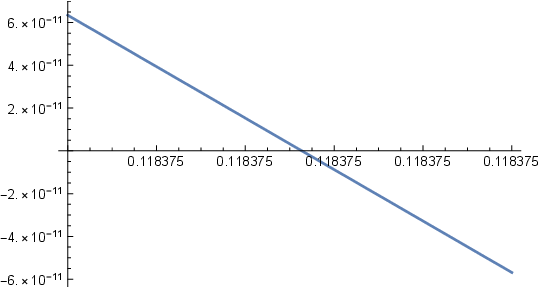}
	\caption{$\{ Z(1/2,a) : a_\flat \le a \le a_\sharp \}$}
	\label{f00}
  \end{center}
\end{minipage}
\begin{minipage}{0.5\hsize}
  \begin{center}
	\includegraphics[height=3.5cm, width=7cm]{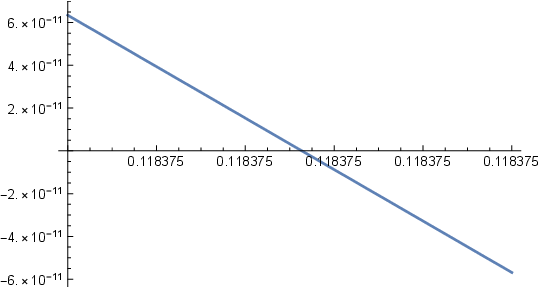}
	\caption{$\{ Q(1/2,a) : a_\flat \le a \le a_\sharp \}$}
	\label{f0}
  \end{center}
 \end{minipage}
 \end{tabular}
\end{center}
\end{figure}

\begin{figure}[htbp]
\begin{center}
 \begin{tabular}{c}
 \begin{minipage}{0.5\hsize}
  \begin{center}
	\includegraphics[height=3.5cm, width=7cm]{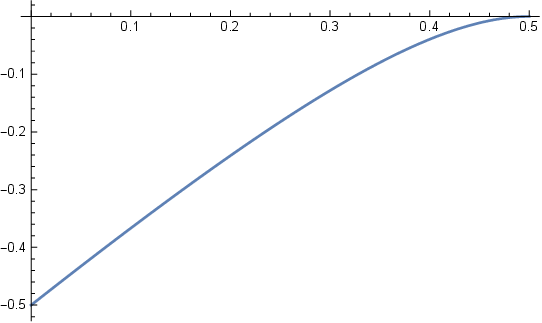}
	\caption{$\{ Q(\sigma, a_\flat) : 0 \le \sigma \le 1/2 \}$}
	\label{f1}
  \end{center}
\end{minipage}
\begin{minipage}{0.5\hsize}
  \begin{center}
	\includegraphics[height=3.5cm, width=7cm]{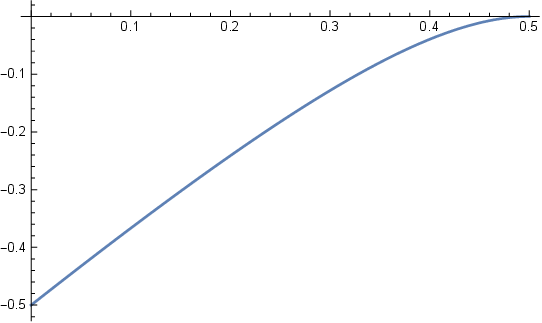}
	\caption{$\{ Q(\sigma, a_\sharp) : 0 \le \sigma \le 1/2 \}$}
	\label{f2}
  \end{center}
 \end{minipage}
 \end{tabular}
\end{center}

\end{figure}
\begin{figure}[htbp]
\begin{center}
 \begin{tabular}{c}
 \begin{minipage}{0.5\hsize}
  \begin{center}
	\includegraphics[height=3.5cm, width=7cm]{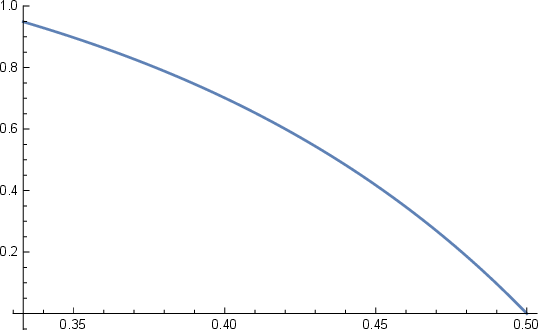}
	\caption{$\{ Q'(\sigma, a_\flat) : 1/3 \le \sigma \le 1/2 \}$}
	\label{f3}
  \end{center}
\end{minipage}
\begin{minipage}{0.5\hsize}
  \begin{center}
	\includegraphics[height=3.5cm, width=7cm]{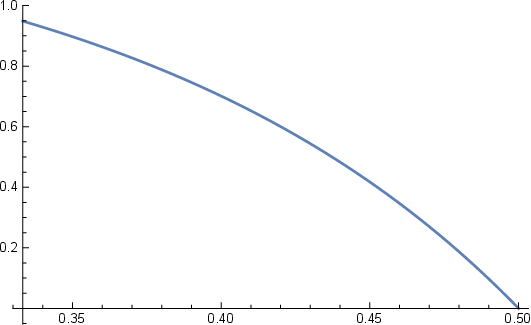}
	\caption{$\{ Q'(\sigma, a_\sharp) : 1/3 \le \sigma \le 1/2 \}$}
	\label{f3}
  \end{center}
 \end{minipage}
 \end{tabular}
\end{center}
\end{figure}

\begin{figure}[htbp]
\begin{center}
 \begin{tabular}{c}
 \begin{minipage}{0.5\hsize}
  \begin{center}
	\includegraphics[height=3.5cm, width=7cm]{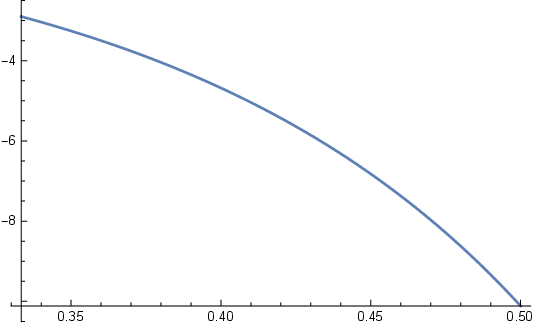}
	\caption{$\{ Q''(\sigma, a_\flat) : 1/3 \le \sigma \le 1/2 \}$}
	\label{f5}
  \end{center}
\end{minipage}
\begin{minipage}{0.5\hsize}
  \begin{center}
	\includegraphics[height=3.5cm, width=7cm]{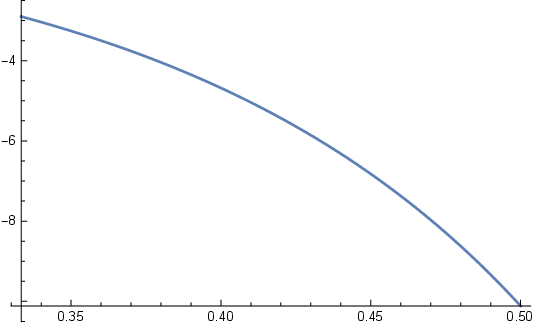}
	\caption{$\{ Q''(\sigma, a_\sharp) : 1/3 \le \sigma \le 1/2 \}$}
	\label{f6}
  \end{center}
 \end{minipage}
 \end{tabular}
\end{center}
\end{figure}

\begin{figure}[htbp]
\begin{center}
 \begin{tabular}{c}
 \begin{minipage}{0.5\hsize}
  \begin{center}
	\includegraphics[height=3.5cm, width=7cm]{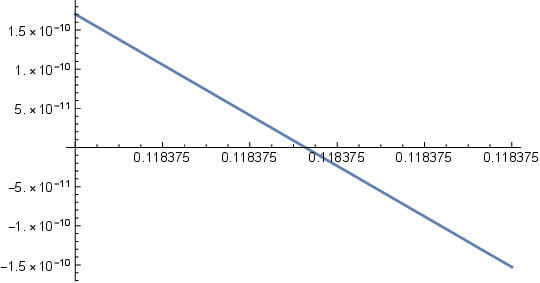}
	\caption{$\{ Q'(1/2,a) : a_\flat \le a \le a_\sharp \}$}
	\label{f7}
  \end{center}
\end{minipage}
\begin{minipage}{0.5\hsize}
  \begin{center}
	\includegraphics[height=3.5cm, width=7cm]{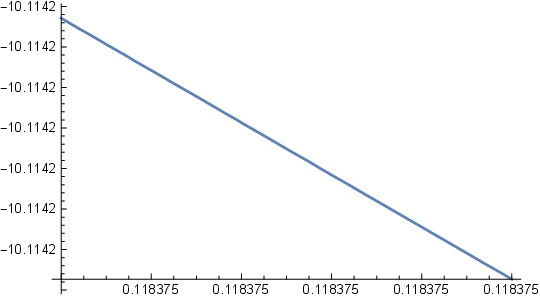}
	\caption{$\{ Q''(1/2,a) : a_\flat \le a \le a_\sharp \}$}
	\label{f8}
  \end{center}
 \end{minipage}
 \end{tabular}
\end{center}
\end{figure}
We can see that $Q'' (\sigma, a_0) < -2 < 0$ for all $1/3 \le \sigma \le 1/2$ by the seventh, eighth and tenth figures. Hence we have
\[
Q (\sigma, a_0) < 0, \qquad \sigma \in (1/3,1/2) .
\]
In addition, we can see that
\[
Q (\sigma, a_0) < 0, \qquad \sigma \in [0,1/3] 
\]
from (\ref{eq:Qin1}), $Q(0,a) =-1/2$ and the third and forth figures.
Therefore, by (\ref{eq:dev0}) and the functional equation (\ref{eq:zfe1q}), we have 
$$
Q(1/2, a_0) = Q'(1/2, a_0) = 0, \qquad Q(\sigma, a_0)<0, \quad \sigma \in (0,1/2) \cup (1/2,1)
$$
which imply the first statement of Theorem \ref{th:m1}. Moreover, from (\ref{eq:zfe1q}), (\ref{eq:Qin1}) and the first statement of this theorem, one has
\begin{equation}\label{ineq:Qneg1}
Q(\sigma,a)<0, \qquad \sigma \in (0,1), \quad a_0 < a \le 1/2.
\end{equation}
which implies the second statement of Theorem \ref{th:m1}. 

Next suppose $0 < a < a_0$. Then we have $Q(1/2,a) >0$ from (\ref{eq:Qin1}). On the other hand, one has $Q(0,a) = -1/2$ for all $0 < a \le 1/2$. Thus we have the third statement of Theorem \ref{th:m1} according to the intermediate value theorem. 
\end{proof}

\begin{proof}[Proof of Corollary \ref{cor:1}]
When $\sigma >1$ and $0<a \le 1/2$, we have
\begin{equation}\label{eq:posi1}
2Q(\sigma ,a) = Z(\sigma,a) + P(\sigma,a) > 
\sum_{n=0}^\infty \left( \frac{1}{(n+a)^\sigma} + \frac{1}{(n+1-a)^\sigma} - \frac{2}{(n+1)^\sigma} \right) >0.
\end{equation}
Moreover, we have $Q(\sigma,a)<0$ if $\sigma \in (0,1)$ and $a_0 < a \le 1/2$ from (\ref{ineq:Qneg1}). Recall that one has $Q(0,a)= -1/2$ by (\ref{eq:Q-1/2}) or \cite[(4.11) and (4.12)]{Na}. Hence, all real zeros of $Q(1-s,a)$ with $a_0 < a \le 1/2$ and $\sigma > 1$ come from $\cos (\pi s/2 ) = 0$ with $\sigma >1$ from (\ref{eq:zfe1q}) and the fact that $\Gamma (\sigma) >0$ when $0<\sigma <1$. Hence every real zero of $Q(s,a)$ with $a_0 < a \le 1/2$ and $\sigma <1$ is caused by $\cos ( \pi (1-s)/2) = 0$ with $\sigma < 0$, which is equivalent to that $s$ is a negative even integer.
\end{proof}

\subsection{Proofs of Propositions \ref{pro:1}{} and \ref{pro:zero1}}
From \cite[(4.1), (4.2), (4.3), (4.4), (4.5), (4.6), (4.7) and (4.8)]{Na}, it holds that
\begin{equation}\label{eq:Q1/2}
Z(s,1/2) = 2(2^s-1) \zeta (s), \qquad P(s,1/2) = 2(2^{1-s}-1) \zeta (s),
\end{equation}
\begin{equation}\label{eq:Q1/3}
Z(s,1/3) = (3^s-1) \zeta (s), \qquad P(s,1/3) = (3^{1-s}-1) \zeta (s),
\end{equation}
\begin{equation}\label{eq:Q1/4}
Z(s,1/4) = 2^s(2^s-1) \zeta (s), \qquad P(s,1/4) = 2^{1-s}(2^{1-s}-1) \zeta (s),
\end{equation}
\begin{equation}\label{eq:Q1/6}
Z(s,1/6) = (2^s-1)(3^s-1) \zeta (s), \qquad P(s,1/6) = (2^{1-s}-1)(3^{1-s}-1) \zeta (s).
\end{equation}

\begin{proof}[Proof of Proposition \ref{pro:1}]
When $a=1/2$, we have
$$
2Q(s,1/2) = 2 (2^s-1) \zeta (s) + 2 (2^{1-s}-1)\zeta (s) = 2(X-2+2X^{-1}) \zeta (s),
$$
where $0 \ne X:= 2^s$, form (\ref{eq:Q1/2}). The solutions of $X^2- 2X +2=0$ are $X=1\pm i$. Obviously, the all solutions of $2^s = 1\pm i$ are on the line $\sigma =1/2$.

Assume that $a=1/3$. By using (\ref{eq:Q1/3}), we have
$$
2Q(s,1/3) = (3^s-1) \zeta (s) + (3^{1-s}-1) \zeta (s) = (Y-2+3Y^{-1}) \zeta(s),
$$
where $0 \ne Y:=3^s$. We can easily see that $Y = 1\pm i \sqrt{2}$ are the roots of $Y^2-2Y+3=0$ and the all solutions of $3^s = 1\pm i \sqrt{2}$. are on the line $\sigma =1/2$.

Let $a=1/4$. By (\ref{eq:Q1/4}), it holds that
$$
2Q(s,1/4) =  2^s (2^s-1) \zeta (s) + 2^{1-s} (2^{1-s}-1) \zeta (s) =
(X^2 - X +4X^{-2} - 2X^{-1})\zeta (s).
$$
The solutions of $X^2 - X +4X^{-2} - 2X^{-1}=0$ are
$$
X = \frac{1}{4} \biggl( 1 + \sqrt{17} \pm i \sqrt{2 (7-\sqrt{17})} \biggr), \quad 
\frac{1}{4} \biggl( 1 - \sqrt{17} \pm i \sqrt{2 (7+\sqrt{17})} \biggr).
$$
The absolute value of the numbers above are $\sqrt{2}$. Therefore, the all roots of $X^2 - X +4X^{-2} - 2X^{-1}=0$, where $X:= 2^s$, are on the line $\sigma =1/2$.

Finally, suppose that $a=1/6$. From (\ref{eq:Q1/6}), one has
\begin{equation*}
\begin{split}
2Q(s,1/6) &= \bigl( (2^s-1)(3^s-1) + (2^{1-s}-1)(3^{1-s}-1) \bigr) \zeta (s) \\
& = (3^s-1) (2^{1-s}-1) \bigl( g_2^+(s) + g_3^+(1-s) \bigr) \zeta (s),
\end{split}
\end{equation*}
where the function $g_p^\pm (s)$ is defined as 
$$
g_p^\pm (s) := \frac{p^s \pm 1}{p^{1-s} \pm 1}, \qquad p=2,3.
$$
Obviously, we have $g_p^\pm(1-s) = 1/ g_p^\pm(s)$ from the definition. At the end of \cite[Section 3.1]{Na}, it is proved that $|g_2^+(s)|=1$ for $\sigma =1/2$, $|g_2^+(s)| <1$ if $\sigma >1/2$ and $|g_2^+(s)| >1$ if $\sigma <1/2$. By modifying the proof of this fact, we can prove that
$$
\bigl|g_p^- (s) \bigr|=1, \quad \sigma =1/2, \qquad  \bigl|g_p^- (s)\bigr|>1/2, \quad \sigma >1/2 , \qquad 
\bigl|g_p^- (s)\bigr|<1/2, \quad \sigma <1/2.
$$
Hence $(2^s-1)(3^s-1) + (2^{1-s}-1)(3^{1-s}-1)$ does not vanish when $\sigma \ne 1/2$. 
\end{proof}

Let $\varphi $ be the Euler totient function, $\chi$ be a primitive Dirichlet character of conductor of $q \in {\mathbb{N}}$ and $L(s,\chi)$ be the Dirichlet $L$-function. Let $G(r,\overline{\chi})$ denote the (generalized) Gauss sum $G(r,\overline{\chi}) := \sum_{n=1}^q \overline{\chi}(n)e^{2\pi irn/q}$ associated to a Dirichlet character $\overline{\chi}$. When $0< r/q \le 1/2$,  where $q$ and $r$ are relatively prime integers, we have 
\begin{equation}\label{eq:qq}
Q(s,r/q) = \frac{1}{2\varphi (q)} \sum_{\chi \!\!\! \mod q} 
\bigl(1+\chi(-1)\bigr) \bigl( \overline{\chi} (r) q^s + G(r,\overline{\chi}) \bigl) L(s,\chi)
\end{equation}
from \cite[(2.3)]{Na}. It is well-know that $\varphi (q) \le 2$ if and only if $q=1,2,3,4,6$. 

\begin{proof}[Proof of Proposition \ref{pro:zero1}]
The upper bound for the number of complex zeros  of $Q(s,a)$ is proved by the Bohr-Landau method (see for instance \cite[Theorem 9.15 (A)]{Tit}), the mean square of $\zeta (s,a)$ and ${\rm{Li}}_s (e^{2\pi ia})$ (see \cite[Theorem 4.2.1]{LauGa}) and the inequality
\begin{equation*}
\begin{split}
\int_2^T \bigl| Q(\sigma+it,a) \bigr|^2 dt & \le 
\int_2^T \bigl| \zeta(\sigma+it,a) \bigr|^2 dt + \int_2^T \bigl| \zeta(\sigma+it,1-a) \bigr|^2 dt \\
&+ \int_2^T \bigl| {\rm{Li}}_{\sigma+it} (e^{2\pi ia}) \bigr|^2 dt + \int_2^T \bigl| {\rm{Li}}_{\sigma+it} (e^{2\pi i(1-a)}) \bigr|^2 dt,
\qquad \sigma >1/2.
\end{split}
\end{equation*}
The lower bounds for the number of zeros of $Q(s,a)$ in the half-planes $1 < \sigma < 1+\delta$ and $-\delta < \sigma < 0$ are prove by (\ref{eq:zfe1q}), (\ref{eq:qq}) and \cite[Corollary]{SW}. Furthermore, the lower bounds for the number of zeros of $Q(s,a)$ in the half-planes $\sigma_1 < \sigma < \sigma_2$ and $1-\sigma_2 < \sigma < 1-\sigma_1$ are shown by (\ref{eq:zfe1q}), (\ref{eq:qq}) and \cite[Theorem 2]{KaczorowskiKulas} and the definition of $Q(s,a)$.
\end{proof}

\subsection{Proofs of Theorem \ref{th:lerch1} and Corollary \ref{cor:lerch1}}
Clearly, the functional equations in (\ref{eq:feZQ1}) imply
\begin{equation}\label{eq:GF1}
\Gamma_{\!\! \cos} (s)  \cdot \Gamma_{\!\! \cos} (1-s)  =1.
\end{equation}
Note that this equality is also proved by Euler's reflection formula
\[
\Gamma (s) \Gamma (1-s) = \frac{\pi}{\sin \pi s}, \qquad s \not \in {\mathbb{Z}}. 
\]

\begin{proof}[Proof of Theorem \ref{th:lerch1}]
From Lerch's formula (\ref{eq:lerch1}) and Euler's reflection formula, we have
\[
\Bigl[ \frac{d}{ds} Z (s,a) \Bigr]_{s=0} = \log \frac{\Gamma (a) \Gamma (1-a)}{2\pi} = - \log (\sin \pi a) - \log 2. 
\]
By (\ref{eq:feZQ1}) and (\ref{eq:GF1}), we can see that
\[
\Bigl[ \frac{d}{ds} P (s,a) \Bigr]_{s=0} = 
\Bigl[ \frac{d}{ds} \Bigl( \Gamma_{\!\! \cos} (1-s) Z(1-s,a) \Bigr) \Bigr]_{s=0} .
\]
In \cite[Theorem 1]{Bern}, Berndt show that
\begin{equation}\label{expa:1}
\zeta (s,a) = \frac{1}{s-1} + \sum_{n=0}^\infty \gamma_n (a) (s-1)^n = \sum_{n=-1}^\infty \gamma_n (a) (s-1)^n, 
\end{equation}
where $\gamma_{-1}(a) := 1$ and $\gamma_n(a)$ are given by
\[
\gamma_n(a) : = \frac{(-1)^n}{n!} \lim_{l \to \infty} \biggl( \sum_{k=0}^l \frac{\log^n (k+a)}{k+a} - \frac{\log^{n+1}(l+a)}{n+1} \biggr),
\qquad n \ge 0.
\]
Note that $\gamma_0(a) = - \psi (a)$, where $\psi (a)$ is the digamma function defined as the logarithmic derivative of the gamma function. We can easily show that
\[
\frac{d}{ds} \Bigl( \Gamma_{\!\! \cos} (1-s)  \Bigr) =  
2 \frac{d}{ds} \biggl( \frac{\Gamma (1-s)}{(2\pi)^{1-s}} \sin \Bigl( \frac{\pi s}{2} \Bigr) \biggr) = D_1 (s) + D_2 (s) + D_3 (s),
\]
where the functions $D_1 (s)$, $D_2 (s)$ and $D_3 (s)$ are defined as
\begin{equation*}
\begin{split}
&D_1 (s) := 2 (2\pi)^{s-1} \Gamma (1-s) \sin \Bigl( \frac{\pi s}{2} \Bigr) \log 2\pi,\\
&D_2 (s) := -2(2\pi)^{s-1} \sin \Bigl( \frac{\pi s}{2} \Bigr) \Gamma (1-s) \psi (1-s),\\
&D_3 (s) := \pi (2\pi)^{s-1} \Gamma (1-s) \cos \Bigl( \frac{\pi s}{2} \Bigr).
\end{split}
\end{equation*}
From the expansion (\ref{expa:1}), we have
\[
Z(1-s,a) = \sum_{n=-1}^\infty \delta_n (a) (-s)^n, \qquad \frac{d}{ds} Z(1-s,a) = \sum_{n=-1}^\infty n \delta_n (a) (-s)^{n-1},
\]
where $\delta_n (a) := \gamma_n (a) + \gamma_n (1-a)$. Hence one has
\[
\bigl[ D_1 (s) Z(1-s,a) \bigr]_{s=0}= \frac{\log 2\pi}{\pi} \Bigl[ \sin \Bigl( \frac{\pi s}{2} \Bigr) Z(1-s,a) \Bigr]_{s=0} = 
\frac{\log 2 \pi}{\pi} \Bigl[ \frac{\pi s}{2} \cdot \frac{2}{-s} \Bigr]_{s=0} = - \log 2\pi , 
\]
\[
\bigl[ D_2 (s) Z(1-s,a) \bigr]_{s=0} = - \frac{\psi (1)}{\pi} \Bigl[ \sin \Bigl( \frac{\pi s}{2} \Bigr) Z(1-s,a) \Bigr]_{s=0} = \psi (1) = -\gamma_E ,
\]
\begin{equation*}
\begin{split}
&\Bigl[ D_3 (s) Z(1-s,a) + \Gamma_{\!\! \cos} (1-s) \frac{d}{ds} Z(1-s,a) \Bigr]_{s=0} \\ &= 
\biggl[ \frac{1}{2} \sum_{n=-1}^\infty \delta_n (a) (-s)^n - \frac{1}{\pi}
\sin \Bigl( \frac{\pi s}{2} \Bigr) \sum_{n=-1}^\infty n \delta_n (a) (-s)^{n-1} \biggr]_{s=0} = \frac{\delta_0 (a)}{2} .
\end{split}
\end{equation*}
Thus we obtain (\ref{eq:thmlerch1}) by $\gamma_0(a) = - \psi (a)$ and $\delta_n (a) := \gamma_n (a) + \gamma_n (1-a)$.
\end{proof}

\begin{proof}[Proof of Corollary \ref{cor:lerch1}]
According to (\ref{expa:1}), we can easily see that
\[
\biggl[ Z(s,a) -\frac{2}{s-1} \biggr]_{s=1} = \delta_0 (a) = - \psi (a) -\psi(1-a) .
\]
On the other hand, it is proved 
\[
P(1,a) = - 2\log (2 \sin \pi a) = - 2\log (\sin \pi a) - 2\log 2 
\]
in \cite[(4.12)]{NaZC}. Therefore, we obtain
\begin{equation}\label{eq:kro2}
\biggl[ Q(s,a) -\frac{1}{s-1} \biggr]_{s=1} = 2 Q' (0,a) + \gamma_E + \log 2 \pi .
\end{equation}
We can find that
\[
\biggl[ \zeta (s) -\frac{1}{s-1} \biggr]_{s=1} = \gamma_E \quad \mbox{ and } \quad \zeta '(0) = - \frac{\log 2\pi}{2}
\]
in \cite[(2.1.16)]{Tit} and \cite[(2.4.5)]{Tit}, respectively. Thus we have (\ref{eq:kro1}). In addition, one has
\[
\psi (a) = \frac{\Gamma' (1)}{\Gamma (1)} + \sum_{n=1}^\infty \biggl( \frac{1}{n+1} - \frac{1}{n+a} \biggr), \qquad
\psi (a) = - \frac{1}{a} + \psi (1+a)
\]
(see \cite[(1.2.13) and (1.2.15)]{AAR}). Thus we have
\[
\psi (1+a), \psi(1-a) = O (1), \qquad 0 \le a \le 1/2.
\]
Hence we obtain (\ref{eq:corasym1}). 
It is shown that
\[
\psi (r/q) + \psi (1-r/q) = -2\gamma_E - 2\log q + 2 \sum_{n=1}^{q-1} \cos \frac{2\pi rn}{q} \log \Bigl( 2 \sin \frac{\pi n}{q} \Bigr)
\]
in \cite[(1.2.17)]{AAR}. The equation above and Theorem \ref{th:lerch1} imply (\ref{eq:corlerch1}). 
\end{proof}

\subsection{Proof of Proposition \ref{pro:Hadpro1}}
First we show the following zero-free region of the quadrilateral zeta function.
\begin{lemma}\label{lem:nonzero1}
For any $0< a \le 1/2$ and $\eta >0$, there exists $\sigma_a' (\eta) \ge 3/2$ such that $|Q(s,a)| > \eta$ for all $\Re (s) \ge \sigma_a' (\eta)$. 
\end{lemma}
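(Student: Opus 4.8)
The plan is to work in the half-plane of absolute convergence and to isolate the dominant term of the Dirichlet series of $Q(s,a)$ as $\sigma = \Re(s) \to +\infty$. First I would write, for $\sigma > 1$,
$$
2Q(s,a) = \sum_{n=0}^\infty (n+a)^{-s} + \sum_{n=0}^\infty (n+1-a)^{-s} + 2\sum_{n=1}^\infty \cos(2\pi n a)\, n^{-s},
$$
using $\mathrm{Li}_s(e^{2\pi i a}) + \mathrm{Li}_s(e^{2\pi i(1-a)}) = 2\sum_{n\ge 1}\cos(2\pi n a)\,n^{-s}$ (since $e^{2\pi i n(1-a)} = e^{-2\pi i n a}$). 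Because $0 < a \le 1/2$, the smallest base occurring among all these terms is $a$, coming from the $n=0$ summand of the first series, and $a \le 1/2$. I would peel it off, writing $2Q(s,a) = a^{-s} + E(s,a)$, where $E(s,a)$ collects the remaining terms.

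Next I would bound the remainder termwise by its series of absolute values,
$$
|E(s,a)| \le \zeta(\sigma, 1+a) + \zeta(\sigma, 1-a) + 2\zeta(\sigma),
$$
every one of whose terms has base at least $1-a \ge 1/2 > a$ when $a < 1/2$. This bound depends on $\sigma$ alone, so all subsequent estimates are automatically uniform in $t = \Im(s)$. Since $|a^{-s}| = a^{-\sigma} \ge 2^{\sigma}$, I would compare
$$
\frac{|E(s,a)|}{|a^{-s}|} = O_a\!\left(\Bigl(\tfrac{a}{1-a}\Bigr)^{\sigma}\right) + O(a^\sigma) \longrightarrow 0 \qquad (\sigma \to +\infty),
$$
the decay being genuine because $a/(1-a) < 1$ for $a < 1/2$. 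Hence there is a threshold beyond which $|E(s,a)| < \tfrac12 a^{-\sigma}$, and then the triangle inequality gives $|2Q(s,a)| \ge a^{-\sigma} - |E(s,a)| > \tfrac12 a^{-\sigma} \ge \tfrac12 2^{\sigma}$. Setting $\sigma_a'(\eta) := \max\{3/2,\ \log_2(4\eta),\ \text{threshold}\}$ then forces $|Q(s,a)| > \tfrac14 2^{\sigma} \ge \eta$ for all $\Re(s) \ge \sigma_a'(\eta)$.

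The one point where this argument must be adjusted is the endpoint $a = 1/2$, and I expect this to be the only genuine obstacle: there the two smallest bases coincide, $a = 1-a = 1/2$, so peeling off a single term leaves a remainder whose leading size $(1-a)^{-\sigma} = a^{-\sigma}$ is no longer negligible, and the crude bound $|a^{-s} + (1-a)^{-s}| \ge |a^{-s}| - |(1-a)^{-s}| = 0$ is useless. It is resolved by noting that the two smallest terms reinforce rather than cancel, since $a^{-s} + (1-a)^{-s} = 2\cdot 2^{s}$ has modulus $2\cdot 2^{\sigma}$, so one should group both into the dominant part; equivalently, one may invoke the explicit identity $2Q(s,1/2) = 2\bigl(2^{s} - 2 + 2^{1-s}\bigr)\zeta(s)$ established in the proof of Proposition \ref{pro:1}, together with $|\zeta(s)| \ge 2 - \zeta(\sigma) \to 1$, to reach the same conclusion directly. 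In every case the estimates are uniform in $t$, which is precisely what the Hadamard product and the subsequent Riemann–von Mangoldt count will require.
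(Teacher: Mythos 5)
Your proposal is correct and follows essentially the same route as the paper: you peel off the dominant term $a^{-s}$, bound the remainder termwise by its series of absolute values (uniformly in $t$), and use $a < 1-a$ to make the remainder negligible against $a^{-\sigma}$, exactly as in the paper's chain of inequalities ending in $a^{-\sigma}-(1-a)^{-\sigma}-4\zeta(3/2)$. Your treatment of the endpoint $a=1/2$ by grouping the two coinciding leading terms into $2\cdot 2^{s}$ is also precisely what the paper does (its bound starts from $2(1/2)^{-\sigma}=2^{\sigma+1}$), with your factorization via $2Q(s,1/2)=2(2^{s}-2+2^{1-s})\zeta(s)$ offered only as an equivalent alternative.
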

\begin{proof}
Fix $0< a < 1/2$. When $\sigma \ge 3/2$ we have
\begin{equation*}
\begin{split}
2|Q(s,a)| & \ge a^{-\sigma} - \sum_{n=1}^\infty \frac{1}{(n+a)^\sigma} - \sum_{n=0}^\infty \frac{1}{(n+1-a)^\sigma} -
2 \sum_{n=1}^\infty \frac{|\cos 2 \pi na|}{n^\sigma} \\ & \ge 
a^{-\sigma} - (1-a)^{-\sigma} - \zeta (\sigma,1+a) - \zeta (\sigma, 2-a) - 2\zeta (\sigma) \\ & \ge 
a^{-\sigma} - (1-a)^{-\sigma} - \zeta (\sigma) - \zeta (\sigma) - 2\zeta (\sigma) \\ & \ge 
a^{-\sigma} - (1-a)^{-\sigma} - 4 \zeta (3/2) 
\end{split}
\end{equation*}
by the series expression of $Q(s,a)$. Note that the function $a^{-\sigma} - (1-a)^{-\sigma}$ is monotonically increasing with respect to $\sigma >1$. Hence the inequality above implies Lemma \ref{lem:nonzero1} with $0< a < 1/2$. When $a=1/2$ and $\sigma \ge 3/2$, one has
\begin{equation*}
\begin{split}
&2|Q(s,1/2)| \ge 2 (1/2)^{-\sigma} - 2 \sum_{n=1}^\infty \frac{1}{(n+1/2)^\sigma} -
2 \sum_{n=1}^\infty \frac{|\cos \pi n|}{n^\sigma} \\ & \ge 
2^{\sigma+1} - 2 \zeta (\sigma,3/2) - 2\zeta (\sigma) \ge 
2^{\sigma+1} - 4 \zeta (3/2) .
\end{split}
\end{equation*}
Hence we have this lemma for $0< a \le 1/2$. 
\end{proof}

\begin{lemma}\label{lem:order1}
Let $0 <a \le 1/2$. Then $\xi_Q(s,a)$ is an entire function of order $1$.
\end{lemma}
\begin{proof}
By the functional equation (\ref{eq:zfe1q}), it suffices to estimate $|\xi_Q(s,a)|$ on the half-plane $\Re (s) \ge 1/2$. From the approximate functional equations of $\zeta (s,a)$ and ${\rm{Li}}_s (e^{2\pi ia})$ (see \cite[Theorems 3.1.2 and 3.1.3]{LauGa}), it holds that
$$
\bigl| s(s-1) Q(s,a) \bigr| = O_a \bigl( |s|^3 \bigr). 
$$
According to the inequalities
$$
\bigl|\Gamma (s) \bigr| \le \exp (c_1|s| \log |s|), \qquad  \bigl| \pi^{-s/2} \bigr| \le \exp (c_2|s|),
$$
where $c_1$ and $c_2$ are some positive constants (see for instance \cite[p.~20]{KV}), $\xi_Q(s,a)$ is a function of at most $1$. The order is exactly $1$ since one has 
$$
4 Q(\sigma,a ) \ge a^{-\sigma}, \qquad \log \Gamma (\sigma) \ge (\sigma - 1/2) \log \sigma - 2 \sigma,
$$
where $\sigma >0$ is sufficiently large, by the general Dirichlet series expression of $Q(s,a)$ and Stirling's formula.
\end{proof}

\begin{proof}[Proof of Proposition \ref{pro:Hadpro1}]
By Lemma \ref{lem:order1} and Hadamard's factorization theorem, we only determine the constants $A$ and $B(a)$. From (\ref{eq:Q-1/2}), one has
$$
\xi_Q (0,a) = - Q(0,a) = 1/2 = e^A. 
$$
Hence it holds that
$$
Q(s,a) = \frac{e^{B(a)s} \pi^{s/2}}{s(s-1) \Gamma (s/2)} \prod_{\rho_a} \biggl( 1 - \frac{s}{\rho_a} \biggr) e^{s/\rho_a} = 
\frac{e^{(B(a)+(\log \pi)/2)s}}{2(s-1) \Gamma (s/2+1)} \prod_{\rho_a} \biggl( 1 - \frac{s}{\rho_a} \biggr) e^{s/\rho_a}. 
$$
By taking the logarithmic derivative of the formula above, we obtain
\begin{equation}\label{eq:logdev1}
\frac{Q' (s,a)}{Q (s,a)} = B(a) + \frac{\log \pi}{2} - \frac{1}{s-1} - \frac{1}{2} \frac{\Gamma' (s/2+1)}{\Gamma (s/2+1)}
+ \sum_{\rho_a} \biggl( \frac{1}{s-\rho_a} + \frac{1}{\rho_a} \biggr)
\end{equation}
By making $s \to 0$, we have
$$
\frac{Q' (0,a)}{Q (0,a)} = B(a) + 1 + \frac{\gamma_E+\log \pi}{2}
$$
which implies Proposition \ref{pro:Hadpro1}.
\end{proof}

\subsection{Proof of Proposition \ref{th:czaQAB1}}
By using the Hadamard product formula for $Q(s,a)$, we show the following approximate formula for $Q' (s,a) /Q(s,a)$ in terms of zeros near to $s$ which is an analogue of \cite[Corollary 1.6.3]{KV} or \cite[Theorem 9.6]{Tit}. 
\begin{proposition}\label{cor:corHadpro1}
Let $0 < a \le 1/2$, $\sigma_a := \sigma_a'+1$, where $\sigma_a' := \sigma_a' (\eta) \ge 3 /2$ is given in Lemma \ref{lem:nonzero1}, and $\gamma_a$ be the imaginary part of the zeros of $\xi_Q (s,a)$. Then for $1- \sigma_a \le \sigma \le \sigma_a$ and $s=\sigma+it$, it holds that
\begin{equation}\label{eq:corhadpro1}
\frac{Q' (s,a)}{Q (s,a)} = - \frac{1}{s-1} + \sum_{|t-\gamma_a|  \le 1} \frac{1}{s-\rho_a} + O_a \bigl( \log(|t|+2) \bigr). 
\end{equation}
\end{proposition}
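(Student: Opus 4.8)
The plan is to imitate the classical treatment of $\zeta'/\zeta$ (Titchmarsh, Chapter IX), adapted to $Q(s,a)$ through the Hadamard product of Proposition \ref{pro:Hadpro1} and the logarithmic derivative identity (\ref{eq:logdev1}). The decisive manoeuvre is to subtract from (\ref{eq:logdev1}) its value at the reference point $s' := \sigma_a + it$, which by the choice $\sigma_a = \sigma_a' + 1$ lies well inside the zero-free half-plane $\Re(s) \ge \sigma_a'$ supplied by Lemma \ref{lem:nonzero1}. This subtraction cancels the unknown constant $B(a)$ together with the convergence factors $1/\rho_a$, and then the task reduces to bounding everything that is not the nearby-zero sum.

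First I would record two preliminary facts. All zeros $\rho_a = \beta_a + i\gamma_a$ of $\xi_Q(s,a)$ lie in the bounded strip $1 - \sigma_a' < \beta_a < \sigma_a'$: the right-hand inequality is Lemma \ref{lem:nonzero1}, and the left-hand one follows from it via the functional equation (\ref{eq:zfe1q}), since a nontrivial zero with $\Re(s) \le 1 - \sigma_a'$ would, after reflection $s \mapsto 1-s$, force $Q$ to vanish where $\Gamma(s)\cos(\pi s/2) \ne 0$ and $\Re(s) \ge \sigma_a'$, contradicting the lemma. I would also note that $(Q'/Q)(s') = O_a(1)$ on the line $\Re(s') = \sigma_a \ge 5/2$, straight from the absolutely convergent Dirichlet series for $Q$ and $Q'$.

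Second — and this is the heart of the matter — I would establish the zero-density estimate
$$
\sum_{\rho_a} \frac{1}{1 + (t - \gamma_a)^2} = O_a\bigl(\log(|t|+2)\bigr).
$$
To obtain it, evaluate (\ref{eq:logdev1}) at $s'$ and take real parts, using the identity $\Re B(a) = -\sum_{\rho_a} \Re(1/\rho_a)$, which follows from $\xi_Q(s,a) = \xi_Q(1-s,a)$ (the Riemann-type functional equation) and the bijection $\rho_a \mapsto 1 - \rho_a$ on the zero set; this makes the constant term and the $1/\rho_a$ sum cancel. Stirling's formula gives $\tfrac12\Re(\Gamma'/\Gamma)(s'/2 + 1) = O(\log(|t|+2))$, while $(Q'/Q)(s')$ and $1/(s'-1)$ are $O_a(1)$, so $\sum_{\rho_a} \Re\bigl(1/(s'-\rho_a)\bigr) = O_a(\log(|t|+2))$. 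Each summand equals $(\sigma_a - \beta_a)\big/\bigl((\sigma_a - \beta_a)^2 + (t-\gamma_a)^2\bigr)$ with $1 \le \sigma_a - \beta_a \le 2\sigma_a'$, hence is comparable to $(1+(t-\gamma_a)^2)^{-1}$, and the estimate follows. In particular the number of $\rho_a$ with $|\gamma_a - t| \le 1$ is $O_a(\log(|t|+2))$, and $\sum_{|\gamma_a - t| > 1}(t-\gamma_a)^{-2} = O_a(\log(|t|+2))$.

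Finally, subtracting (\ref{eq:logdev1}) at $s'$ from (\ref{eq:logdev1}) at $s$ yields
$$
\frac{Q'(s,a)}{Q(s,a)} = -\frac{1}{s-1} + \sum_{\rho_a}\Bigl(\frac{1}{s-\rho_a} - \frac{1}{s'-\rho_a}\Bigr) + E,
$$
where $E$ absorbs $(Q'/Q)(s')$, $1/(s'-1)$ and the digamma difference, all $O_a(\log(|t|+2))$ by the previous step. For the far zeros I would write the summand as $(\sigma_a - \sigma)\big/\bigl((s-\rho_a)(s'-\rho_a)\bigr)$ and bound it by $O_a(1)\,(t-\gamma_a)^{-2}$, using $|s-\rho_a|,|s'-\rho_a| \ge |t-\gamma_a|$ and $|\sigma_a - \sigma| \le 2\sigma_a - 1$; summation gives $O_a(\log(|t|+2))$. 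For the near zeros, each $1/(s'-\rho_a)$ is $O(1)$ because $|s'-\rho_a| \ge \sigma_a - \beta_a \ge 1$, and there are only $O_a(\log(|t|+2))$ of them, so they merge into the error term, leaving precisely $\sum_{|t-\gamma_a|\le 1} 1/(s-\rho_a)$. This is exactly (\ref{eq:corhadpro1}). The main obstacle is the density estimate of the third step; once it is in hand, the rest is routine bookkeeping driven by the inequality $|s-\rho_a| \ge |t-\gamma_a|$.
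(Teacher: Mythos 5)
Your proposal is correct and, in its overall architecture, coincides with the paper's proof: both start from the logarithmic derivative (\ref{eq:logdev1}) of the Hadamard product, establish the density bound $\sum_{\rho_a}\bigl(1+(t-\gamma_a)^2\bigr)^{-1}\ll_a \log(|t|+2)$ together with its corollary that only $O_a(\log(|t|+2))$ zeros satisfy $|t-\gamma_a|\le 1$, subtract the identity at the reference point $\sigma_a+it$, and split into near and far zeros using precisely the bounds $|s-\rho_a|,\,|\sigma_a+it-\rho_a|\ge |t-\gamma_a|$ and $|\sigma_a+it-\rho_a|\ge \sigma_a-\beta_a\ge 1$. The one genuine divergence is how the density estimate is reached. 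The paper evaluates (\ref{eq:4.13}) at $s=\sigma_a+it$, deduces $\sum_{\rho_a}\bigl(\tfrac{1}{\sigma_a+it-\rho_a}+\tfrac{1}{\rho_a}\bigr)=O_a(\log(|t|+2))$ from Lemma \ref{lem:nonzero1}, and then drops the terms $\Re(1/\rho_a)=\beta_a/(\beta_a^2+\gamma_a^2)$ as if they were nonnegative --- a step that tacitly assumes $\beta_a\ge 0$, which is not guaranteed here, since by Proposition \ref{pro:zero1} the function $Q(s,a)$, and hence $\xi_Q(s,a)$, can have zeros with $-\delta<\beta_a<0$. You instead cancel $B(a)$ against $\sum_{\rho_a}1/\rho_a$ exactly, via the classical identity $\Re B(a)=-\sum_{\rho_a}\Re(1/\rho_a)$, leaving only the manifestly positive summands $(\sigma_a-\beta_a)\big/\bigl((\sigma_a-\beta_a)^2+(t-\gamma_a)^2\bigr)$ with $1\le\sigma_a-\beta_a\le 2\sigma_a'$; this Davenport-style route is more robust and quietly repairs the paper's sign issue. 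Two points you should make explicit if you write this up: the identity for $\Re B(a)$ needs the symmetric functional equation $\xi_Q(s,a)=\xi_Q(1-s,a)$, which the paper never states but which follows from (\ref{eq:zfe1q}) by the duplication and reflection formulas for $\Gamma$, exactly as for $\zeta(s)$ (the paper itself uses the equivalent relation $\xi_Q'/\xi_Q(s,a)=-\,\xi_Q'/\xi_Q(1-s,a)$ in the proof of Proposition \ref{th:czaQAB1}); and the absolute convergence of $\sum_{\rho_a}\Re(1/\rho_a)$ should be noted, which follows from Lemma \ref{lem:order1} combined with the confinement of the $\rho_a$ to the strip $1-\sigma_a'<\beta_a<\sigma_a'$ that you established at the outset. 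Finally, your carrying the term $-1/(s-1)$ explicitly through the subtraction is cleaner than the paper's (\ref{eq:5.21}), where it is silently absorbed and only restored in the final statement.
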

\begin{proof}
From (\ref{eq:logdev1}) and the Stirling formula
\begin{equation}\label{eq:star1}
\frac{\Gamma' (s/2)}{\Gamma (s/2)} = \log (s/2) + O_a\bigl( |s|^{-1} \bigr),
\end{equation}
it holds that
\begin{equation}\label{eq:4.13}
\frac{Q' (s,a)}{Q (s,a)} = - \frac{1}{s-1} + \sum_{\rho_a} \biggl( \frac{1}{s-\rho_a} + \frac{1}{\rho_a} \biggr) + O_a \bigl( \log(|t|+2) \bigr).
\end{equation}
By putting $s=\sigma_a+it$, we obtain
$$
\sum_{\rho_a} \biggl( \frac{1}{\sigma_a+it-\rho_a} + \frac{1}{\rho_a} \biggr) = O_a \bigl( \log(|t|+2) \bigr)
$$
since the general Dirichlet series of $Q(s,a)$ converges absolutely and does not vanish when $\sigma \ge \sigma_a = \sigma_a'+1$ from Lemma \ref{lem:nonzero1}. Let $\rho_a := \beta_a + i \gamma_a$. Then, the real part of the each term in the infinite summation above can be expressed as
$$
\frac{\sigma_a - \beta_a}{(\sigma_a - \beta_a)^2 + (t - \gamma_a)^2} + \frac{\beta_a}{\beta_a^2 + \gamma_a^2}
\ge \frac{\sigma_a - \beta_a}{(\sigma_a - \beta_a)^2 + (t - \gamma_a)^2} \gg \frac{1}{1 + (t - \gamma_a)^2}.
$$
Therefore, we have
\begin{equation}\label{eq:5.19}
\sum_{\rho_a} \frac{1}{1 + (t - \gamma_a)^2} \ll_a \log(|t|+2).
\end{equation}
We have the left hand side of (\ref{eq:5.19}) $\gg N( T+1, Q(s,a) ) - N( T, Q(s,a) )$ since one has $(1 + (t - \gamma_a)^2)^{-1} \gg 1$ when $0 < t \le \gamma_a \le t+1$. Thus we obtain
\begin{equation}\label{eq:5.20}
N\bigl( T+1, Q(s,a) \bigr) - N\bigl( T, Q(s,a) \bigr) \ll_a \log(|t|+2).
\end{equation}
From (\ref{eq:4.13}) and we subtract the same equality with $s=\sigma_a+it$,
\begin{equation}\label{eq:5.21}
\frac{Q' (s,a)}{Q (s,a)} = \sum_{\rho_a} \biggl( \frac{1}{s-\rho_a} - \frac{1}{\sigma_a+it - \rho_a} \biggr) + O_a \bigl( \log(|t|+2) \bigr).
\end{equation}
According to the assumption $1-\sigma_a \le \sigma \le \sigma_a$, one has
$$
\biggl| \frac{1}{s-\rho_a} - \frac{1}{\sigma_a+it - \rho_a} \biggr| =
\frac{\sigma_a - \sigma}{|(s-\rho_a)(\sigma_a+it - \rho_a)|} \le \frac{2\sigma_a}{|t - \gamma_a|^2} .
$$
Hence the the infinite summation in (\ref{eq:5.21}) with $|t-\gamma_a| > 1$ is $O_a ( \log(|t|+2) )$ by (\ref{eq:5.19}). Moreover, it holds that
$$
\sum_{|t-\gamma_a| \le 1} \frac{1}{\sigma_a+it - \rho_a} = O_a \bigl( \log(|t|+2) \bigr)
$$
from (\ref{eq:5.20}) and the inequality $|\sigma_a+it - \rho_a| \ge 1$ which is proved by Lemma \ref{lem:nonzero1} and the definitions of $\sigma_a'$ and $\sigma_a$. Therefore, we have (\ref{eq:corhadpro1}) by (\ref{eq:5.21}). 
\end{proof}

\begin{proof}[Proof of Proposition \ref{th:czaQAB1}]
The proof method is using the argument principle (see for instance \cite[Section 1.8]{KV} and \cite[Section 9.3]{Tit}).
Fix $0< a \le 1/2$ and assume that $Q(s,a)$ does not vanish on the line $\Im (s) =T$. Let $\mathcal{C}$ be the rectangular contour with vertices $s =\sigma_a \pm T$, $s= 1-\sigma_a \pm T$, where $\sigma_a>0$ is given in Proposition \ref{cor:corHadpro1}. By Theorem \ref{th:m1} and Littlewood's Lemma, it holds that
\begin{equation}\label{eq:little1}
N\bigl( T, Q(s,a) \bigr) = \frac{1}{2\pi i} \int_{\mathcal{C}} \frac{\xi_Q' (s,a)}{\xi_Q (s,a)} ds + O_a (1). 
\end{equation}
From the definition of $\xi_Q (s,a)$, one has
\begin{equation}\label{eq:KV3}
\frac{\xi_Q' (s,a)}{\xi_Q (s,a)} = \frac{1}{s} + \frac{1}{s-1} - \frac{\log \pi}{2} 
+ \frac{1}{2} \frac{\Gamma' (s/2)}{\Gamma (s/2)} + \frac{Q' (s,a)}{Q (s,a)}. 
\end{equation}
The equation (\ref{eq:little1}) can be expressed as
\begin{equation}\label{eq:little2}
\begin{split}
N\bigl( T, Q(s,a) \bigr) + O_a (1) &= \frac{1}{2\pi} \int_{-T}^T 
\biggl( \frac{\xi_Q' (\sigma_a+it,a)}{\xi_Q (\sigma_a+it,a)} - \frac{\xi_Q' (1-\sigma_a+it,a)}{\xi_Q (1-\sigma_a+it,a)} \biggr) dt \\
&\quad + \frac{1}{2\pi i} \int_{1-\sigma_a}^{\sigma_a}
\biggl( \frac{\xi_Q' (\sigma-iT,a)}{\xi_Q (\sigma-iT,a)} - \frac{\xi_Q' (\sigma+iT,a)}{\xi_Q (\sigma+iT,a)} \biggr) d\sigma \\
&=: I_1+I_2,
\end{split}
\end{equation}
where $I_1$ and $I_2$ are the first and second integrals in the last formula. 

First we find an upper bound of for $|I_2|$. By (\ref{eq:star1}) and the definition of $I_2$, we have
$$
I_2 = \frac{1}{2\pi i} \int_{1-\sigma_a}^{\sigma_a} 
\biggl( \frac{Q' (\sigma-iT,a)}{Q (\sigma-iT,a)} - \frac{Q' (\sigma+iT,a)}{Q (\sigma+iT,a)} \biggr) d\sigma + O_a(\log T). 
$$
From Proposition \ref{cor:corHadpro1} one has 
\begin{equation}\label{eq:KV6}
\frac{Q' (s,a)}{Q (s,a)} = \sum_{|T-\gamma_a| \le 1} \frac{1}{\sigma+iT-\rho_a} + O_a(\log T),
\end{equation}
where $\rho_a$ are the zeros of $\xi_Q (s,a)$. Now let ${\mathcal{C}}'$ be the rectangular contour with vertices $s=1-\sigma_a+iT$, $s=\sigma_a+iT$, $s=\sigma_a+i(T-2)$, $s=1-\sigma_a+i(T-2)$. Note that the number of $\rho_a$ satisfying $T-2 \le \Im (\rho_a) \le T$ is $O_a(\log T)$ from (\ref{eq:5.20}). Hence, we obtain 
$$
\int_{{\mathcal{C}}'} \left( \sum_{|T-\gamma_a| \le 1} \frac{1}{s-\rho_a} \right) ds = O_a(\log T).
$$
This integral over ${\mathcal{C}}'$ can also be written as 
\begin{equation*}
\begin{split}
&\int_{{\mathcal{C}}'} \left( \sum_{|t-\gamma_a| \le 1} \frac{1}{s-\rho_a} \right) ds = 
\sum_{T-1 \le \gamma_a \le T+1} \int_{{\mathcal{C}}'} \frac{ds}{s-\rho_a} = \\
&- \sum_{T-1 \le \gamma_a \le T+1} \int_{1-\sigma_a}^{\sigma_a} \frac{d\sigma}{\sigma+iT-\rho_a}
+ i \sum_{T-1 \le \gamma_a \le T+1} \int_{T-2}^T \frac{dt}{\sigma_a+iT-\rho_a} \\
&- i \sum_{T-1 \le \gamma_a \le T+1} \int_{T-2}^T \frac{dt}{1-\sigma_a+iT-\rho_a} + 
\sum_{T-1 \le \gamma_a \le T+1} \int_{1-\sigma_a}^{\sigma_a} \frac{d\sigma}{\sigma+i(T-2)-\rho_a}
\end{split}
\end{equation*}
We can easily see that the last three sums are $O_a(\log T)$. Hence the first sum is also $O_a(\log T)$. From this fact and (\ref{eq:KV6}), we can conclude that
$$
I_2 = O_a(\log T). 
$$

Next we estimate $I_1$, making use of (\ref{eq:KV3}) and the relation
$$
\frac{\xi_Q' (s,a)}{\xi_Q (s,a)} = - \frac{\xi_Q' (1-s,a)}{\xi_Q (1-s,a)}.
$$
From (\ref{eq:star1}), (\ref{eq:KV3}) and the formula above, we have
\begin{equation*}
\begin{split}
&\frac{\xi_Q' (\sigma_a+it,a)}{\xi_Q (\sigma_a+it,a)} - \frac{\xi_Q' (1-\sigma_a+it,a)}{\xi_Q (1-\sigma_a+it,a)} =
\frac{\xi_Q' (\sigma_a+it,a)}{\xi_Q (\sigma_a+it,a)} + \frac{\xi_Q' (\sigma_a-it,a)}{\xi_Q (\sigma_a-it,a)}  \\
=\, & \frac{\sigma_a^2}{\sigma_a^2+t^2} + \frac{(1-\sigma_a)^2}{(1-\sigma_a)^2+t^2} - \log \pi + 
\frac{1}{2} \log \frac{\sigma_a^2+t^2}{\sigma_a^2} + O_a\bigl( (\sigma_a^2+t^2)^{-1/2} \bigr) \\
& + \frac{Q' (\sigma_a+it,a)}{Q (\sigma_a+it,a)} + \frac{Q' (\sigma_a-it,a)}{Q (\sigma_a-it,a)} .
\end{split}
\end{equation*}
Obviously, one has
\begin{equation*}
\begin{split}
\int_{-T}^T \frac{Q' (\sigma_a+it,a)}{Q (\sigma_a+it,a)} dt = 
\int_{-T}^T \frac{(a^{-\sigma_a-it})'}{ a^{-\sigma_a-it}} dt + 
\int_{-T}^T \frac{(a^{\sigma_a+it} Q (\sigma_a+it,a))'}{ a^{\sigma_a+it} Q (\sigma_a+it,a)} dt.
\end{split}
\end{equation*}
For the first integral, we have
$$
\int_{-T}^T \frac{(a^{-\sigma_a-it})'}{ a^{-\sigma_a-it}} dt = -i \Bigl[ \log a^{-\sigma_a-it} \Bigr]_{-T}^T = -2T \log a. 
$$
For some $\theta >0$, without loss of generality we can assume that $\sigma_a$ satisfies
$$
2 \Re \bigl( a^{\sigma_a+it} Q (\sigma_a+it,a) \bigl) \ge 1 - \Bigl( \frac{a}{1-a} \Bigr)^{\sigma_a} - 4a^{\sigma_a} \zeta (\sigma_a) > \theta
$$
when $0 < a <1/2$ by modifying the proof of Lemma \ref{lem:nonzero1}. One can assume similarly when $a=1/2$ (see the proof of Lemma \ref{lem:nonzero1}). Hence we obtain 
$$
\int_{-T}^T \frac{(a^{\sigma_a+it} Q (\sigma_a+it,a))'}{ a^{\sigma_a+it} Q (\sigma_a+it,a)} dt = 
-i \Bigl[ \bigl( \log a^{\sigma_a+it} Q (\sigma_a+it,a) \bigr) \Bigr]_{-T}^T = O_a (1). 
$$
Therefore, it holds that
\begin{equation*}
\begin{split}
I_1 &= \frac{1}{2\pi} \int_{-T}^T
\biggl( \frac{\xi_Q' (\sigma_a+it,a)}{\xi_Q (\sigma_a+it,a)} + \frac{\xi_Q' (\sigma_a-it,a)}{\xi_Q (\sigma_a-it,a)} \biggr) dt \\
&= -\frac{T}{\pi} \log \pi - \frac{T}{\pi} \log 2 + \frac{T}{\pi} \log T - \frac{T}{\pi} - \frac{2T}{\pi} \log a + O_a(\log T) \\ 
&= \frac{T}{\pi} \log T - \frac{T}{\pi} \log (2 e \pi a^2) + O_a(\log T) .
\end{split}
\end{equation*}
The theorem in this case follows from the formula above and the bound for $I_2$. If we suppose that $Q(s,a)$ has zeros on the line $\Im (s) =T$, then the theorem follows from the case above of the theorem along with (\ref{eq:5.20}).
\end{proof}

\subsection*{Acknowledgments}
The author was partially supported by JSPS grant 16K05077. The author would like to thank Mr.~Hiroki Sakurai for his helpful comments for Proposition \ref{th:czaQAB1} and its proof.

 
\end{document}